\def\benm{\begin{enumerate}}
\def\eenm{\end{enumerate}}
\def\bal{\begin{align}}
\def\eal{\end{align}}
\newtheorem{theorem}{Theorem}[section]
\theoremstyle{definition}
\newtheorem{definition}[theorem]{Definition}
\newtheorem{proposition}[theorem]{Proposition}
\newtheorem{corollary}[theorem]{Corollary}
\theoremstyle{remark}
\newtheorem{remark}[theorem]{Remark}
\numberwithin{equation}{section}
\begin{document}

\title[Measure Algebras on Coset Spaces of Compact Subgroups]
{Abstract Structure of Measure Algebras on Coset Spaces of Compact Subgroups in Locally Compact Groups}

\author[A. Ghaani Farashahi]{Arash Ghaani Farashahi}
\address{Department of Pure Mathematics, School of Mathematics, University of Leeds, Leeds LS2 9JT, United Kingdom.}
\email{a.ghaanifarashahi@leeds.ac.uk}
\email{ghaanifarashahi@outlook.com}

\curraddr{}




\subjclass[2010]{Primary 43A85, Secondary 43A10, 43A15, 43A20.}

\date{}


\keywords{Complex measure, measure algebra, homogeneous space, coset space, compact subgroup, convolution, involution.}
\thanks{E-mail addresses: a.ghaanifarashahi@leeds.ac.uk (Arash Ghaani Farashahi)}

\begin{abstract}
This paper presents a systematic operator theory approach for abstract structure of 
Banach measure algebras 
over coset spaces of compact subgroups. Let $H$ be a compact subgroup of a locally compact group $G$ and $G/H$ be the left coset space associated to the subgroup $H$ in $G$. Also, let $M(G/H)$ be the Banach measure space consists of all complex measures over $G/H$. We then introduce an operator theoretic characterization for the abstract notion of involution over the Banach measure space $M(G/H)$. 
\end{abstract}

\maketitle

\section{\bf{Introduction}}

The following paper extends our recent results concerning the abstract structure of involutions on some Banach algebras on homogeneous spaces \cite{AGHF.BBMS-NS, AGHF.IntJM, AGHF.IJM} to more general settings. The mathematical theory of Banach convolution algebras plays significant and classical roles 
in abstract harmonic analysis, representation theory, functional analysis, operator theory, and $C^*$-algebras, see \cite{der00, Dix, HR1, HR2, kan.lau, Lau.ADV.2016, Lau.Ri, Lau.Ngai, Lau.ADV.2012, 50} and references therein. Over the last decades, some new aspects and applications of Banach convolution algebras have achieved significant popularity in different areas such as constructive approximation \cite{Fei0, Fei1, Fei2}, and theoretical aspects of coherent state (covariant) analysis, see \cite{kisil.cov1} and references therein. Homogeneous spaces are group-like structures with many applications in mathematical physics, differential geometry, geometric analysis, and coherent state (covariant) transforms, see \cite{AGHF.CAOT, AGHF.AAMP, AGHF.FUM, kisil, kisil.cov2, kisil1}.

Let $G$ be a locally compact group and $H$ be a compact subgroup of $G$. 
Suppose $G/H$ is the left coset space of the subgroup $H$ in $G$.
In \cite{Kam.Tav} that author's attitude toward rigor and precision is exceedingly carefree and the paper is full of inaccuracies, they tried to present mathematical definitions for the notions of convolution and involution for functions defined on $G/H$, which did not work. 
In Section 3 of \cite{AGHF.BMMSS} as a part of the paper, Ghaani Farashahi showed that the presented definition in \cite{Kam.Tav} is well-defined only when $H$ is normal in $G$. In this case, the introduced convolution and involution are the same as the canonical convolution and involution on the quotient groups, that is not the case for (pure) homogeneous spaces.
The solid and well-defined definitions for the abstract notion of convolution and different approaches to the abstract notion of involution on coset spaces of compact subgroups in locally compact groups introduced in (3.5) and (4.3) of \cite{AGHF.BMMSS}, respectively.

A unified operator theoretic approach introduced by Ghaani Farashahi, which gives a better understanding and characterization of the algebraic (convolution and involution) structure in both function and measure settings on the coset space $G/H$. In the case $G$ is compact, analytic and algebraic aspects of this operator theoretic approach for functions studied at depth in \cite{AGHF.CJM, AGHF.IJM} and in the settings of measures investigated extensively in \cite{AGHF.IntJM}. Our results concerning operator theoretic characterization for structure of convolution and involution of functions defined on homogeneous spaces of compact groups \cite{AGHF.IJM}, extended to the case $G$ is locally compact and $H$ is compact in \cite{AGHF.BBMS-NS}.

This paper extends our results in two different directions. As the first direction, 
this paper extends
results of \cite{AGHF.IntJM} related to involutions on homogenuos spaces of compact groups, to the case $G$ is locally compact group and $H$ is a compact subgroup. Second direction is extension of operator theoretic characterization of function spaces presented in \cite{AGHF.BBMS-NS} for measure spaces.
This article contains 5 sections. Section 2 is devoted to fixing notations and preliminaries on functional analysis, classical mathematical analysis on locally compact Hausdorff spaces, and classical harmonic analysis on locally compact groups and (left) coset spaces of compact subgroups. Let $\mu$ be the normalized $G$-invariant measure on the homogeneous space $G/H$ associated to  Weil's formula. Also, let 
$M(G/H)$ be the Banach measure space consists of all complex measures over $G/H$.
In section 3, a short review for operator theoretic characterization of the abstract notions of generalized convolution and involution on classical function spaces over the homogeneous space $G/H$ is given. Section 4 is devoted to present a unified summary for classical aspects of abstract harmonic analysis over spaces of complex measures on coset spaces of compact subgroups.  Section 5 is proposed to introduce the operator theoretic characterization for the abstract notions of involution over the Banach measure space $M(G/H)$. 

\section{\bf{Preliminaries and Notations}}

Throughout this section, we present basic preliminaries and notations.

\subsection{Functional and Classical Analysis over Locally Compact Hausdorff Spaces}
Let $\mathcal{X},\mathcal{Y}$ be Banach spaces with the topological (linear) dual spaces $\mathcal{X}^*$ 
and $\mathcal{Y}^*$ respectively. 
Also, let $T:\mathcal{X}\to\mathcal{Y}$ be a bounded linear operator. 
The Banach space adjoint of the linear map 
$T$, is the bounded linear map $T^*:\mathcal{Y}^*\to\mathcal{X}^*$ given by 
\[
T^*(\beta)(a):=\beta(T(a)),
\]
for all $a\in X$ and $\beta\in\mathcal{Y}^*$.

For a locally compact Hausdorff space $X$, 
$\mathcal{C}(X)$ denotes the space of all continuous complex-valued functions 
on $X$, $\mathcal{C}_0(X)$ stands for the subspace of $\mathcal{C}(X)$ consists of all 
continuous complex-valued functions on $X$ which are vanishing at infinity, and $\mathcal{C}_c(X)$ is the subspace 
of $\mathcal{C}(X)$ which contains all continuous complex-valued functions 
on $X$ with compact support. It is easy to see that 
\[
 \mathcal{C}_c(X)\subseteq\mathcal{C}_0(X)\subseteq\mathcal{C}(X).
\]

If $\mu$ is a positive Radon measure on $X$, for each $1\le p<\infty$ 
the Banach space of equivalence classes of $\mu$-measurable complex valued functions $f:X\to\mathbb{C}$ such that
$$\|f\|_{L^p(X,\mu)}=\left(\int_X|f(x)|^pd\mu(x)\right)^{1/p}<\infty,$$
is denoted by $L^p(X,\mu)$ which contains $\mathcal{C}_c(X)$ as a $\|.\|_{L^p(X,\mu)}$-dense subspace.
The linear space consists of all regular countably additive complex Borel measures on $X$ is denoted 
by $M(X)$. It is a Banach space with respect to the total variation norm $\|.\|_{M(X)}$,
defined as 
\[
\|\lambda\|_{M(X)}:=|\lambda|(X), 
\]
for all $\lambda\in M(X)$, where $|\lambda|$ is the absolute value of $\lambda$.

It is well-known as the Riesz-Markov theorem \cite{HR1} that, (i) for any unique regular 
countably additive complex Borel measure $\lambda$ on $X$,
the mapping $f\mapsto\lambda(f)$ is a continuous functional on $\mathcal{C}_0(X)$, where 
\[
\lambda(f):=\int_Xf(x)d\lambda(x).
\]
(ii) for any continuous linear functional $\Gamma$ on $\mathcal{C}_0(X)$, 
there is a unique regular countably additive complex Borel measure $\lambda_\Gamma$ on $X$ such that
\[
\Gamma(f)=\lambda_\Gamma(f)=\int_X f(x)d\lambda_\Gamma(x), 
\]
for all $f\in\mathcal{C}_0(X)$. Also, we have 
$\|\Gamma\|=\|\lambda_\Gamma\|_{M(X)}$, where $\|\Gamma\|$ is the operator norm of the functional $\Gamma$, that is 
\[
\|\Gamma\|:=\sup_{\{f\in\mathcal{C}_0(X):\|f\|_{\sup}\le 1\}}|\Gamma(f)|. 
\]

\subsection{Abstract Harmonic Analysis over Locally Compact Groups}
Let $G$ be a locally compact group with the modular function $\Delta_G$ and a 
left Haar measure $\sigma$. 
For $p\ge 1$ the notation $L^p(G)$ stands for the Banach function space $L^p(G,\sigma)$. 
The standard convolution for $f,g\in L^1(G)$, is defined via
$$
f\ast_G g(x)=\int_Gf(y)g(y^{-1}x)d\sigma(y)\quad (x\in G).
$$
The involution for $f\in L^1(G)$, is defined by $f^{*^G}(x)=\Delta_G(x^{-1})\overline{f(x^{-1})}$ for $x\in G$. 
Then the Banach function space $L^1(G)$ equipped with the convolution (\ref{Lp.G}) 
and also the involution (\ref{inv.G}) is 
a Banach $*$-algebra, that is 
\begin{equation}\label{Lp.G}
\|f\ast_G g\|_{1}\le \|f\|_1\|g\|_1,
\end{equation}
and 
\begin{equation}\label{inv.G}
(f\ast_G g)^{*^G}=g^{*^G}\ast_G f^{*^G},
\end{equation}
for all $f,g\in L^1(G)$, see \cite{HR1, HR2, 50} and classical list of references therein.

The standard convolution for complex measures 
$\nu,\nu'\in M(G)$ is the complex measure $\nu\ast_G\nu'\in M(G)$
which is given by 
\begin{equation}\label{m.conv.G}
\nu\ast_G\nu'(f)=\int_G\int_G f(xy)d\nu(x)d\nu'(y),
\end{equation}
for all $f\in\mathcal{C}_0(G)$. Also, the involution of the complex measure 
$\nu\in M(G)$ is the complex measure $\nu^{\ast^G}$ given by 
\begin{equation}\label{m.involution.G}
\nu^{\ast^G}(f):=\int_Gf(x^{-1})d\overline{\nu}(x),
\end{equation}
for all $f\in\mathcal{C}_0(G)$.

The Banach measure space $M(G)$ equipped with the convolution (\ref{m.conv.G}) 
and also the involution (\ref{m.involution.G}) is 
a Banach $*$-algebra, that is 
\begin{equation}\label{m.G}
\|\nu\ast_G\nu'\|_{M(G)}\le \|\nu\|_{M(G)}\|\nu'\|_{M(G)},
\end{equation}
and 
\begin{equation}\label{m.conv.inv.G}
(\nu\ast_G \nu')^{*^G}={\nu'}^{*^G}\ast_G \nu^{*^G},
\end{equation}
for all $\nu,\nu'\in M(G)$, see \cite{HR1, HR2, 50} and classical list of references therein.

For $f\in L^1(G)$, let $\sigma_f\in M(G)$ be the complex measure on $G$ given by 
\[
\sigma_f(g)=\int_Gg(x)f(x)d\sigma(x),
\]
for all $g\in\mathcal{C}_0(G)$. Then, it is well-known that $f\mapsto\sigma_f$ 
is an isometric $*$-homomorphism embedding of the Banach function 
$*$-algebra $L^1(G)$ into the Banach measure $*$-algebra $M(G)$, see \cite{HR1, 50}. That is, 
\begin{equation}\label{sigma.hom}
\|\sigma_f\|_{M(G)}=\|f\|_{L^1(G)},\hspace{1cm}\sigma_{f\ast_Gf'}=\sigma_f\ast_G\sigma_{f'},
\end{equation}
for all $f,f'\in L^1(G)$.

Let $H$ be a compact subgroup of the locally compact group $G$ with the probability Haar measure $dh$. 
The left coset space $G/H$ is considered as a locally compact homogeneous space that $G$ 
acts on it from the left, and $q:G\to G/H$ given by $x\mapsto q(x):=xH$ is the surjective canonical map. 
The classical aspects of abstract harmonic analysis on locally compact homogeneous spaces are quite well studied in many references, see \cite{HR1, HR2, 50} and references therein.
The function space $\mathcal{C}_c(G/H)$ consists of all functions $T_H(f)$, where 
$f\in\mathcal{C}_c(G)$ and
\begin{equation}\label{5.1}
T_H(f)(xH)=\int_Hf(xh)dh.
\end{equation}
Let $\mu$ be a Radon measure on $G/H$ and $x\in G$. 
The translation $\mu_x$ of $\mu$ is defined by $\mu_x(E)=\mu(xE)$, for all Borel subsets $E$ of $G/H$. 
The measure $\mu$ is called $G$-invariant if $\mu_x=\mu$, for all $x\in G$.
The homogeneous space $G/H$ has a normalized $G$-invariant measure $\mu$, which satisfies 
Weil's formula  
\begin{equation}\label{TH.m}
\int_{G/H}T_H(f)(xH)d\mu(xH)=\int_Gf(x)dx,
\end{equation}
and hence the linear map $T_H$ is norm-decreasing, that is 
$$\|T_H(f)\|_{L^1(G/H,\mu)}\le \|f\|_{L^1(G)},$$
for all $f\in L^1(G)$. 

For a function $\varphi\in L^p(G/H,\mu)$ and $z\in G$, the left action of $z$ on $\varphi$ is defined by $L_z\varphi(xH)=\varphi(z^{-1}xH)$ for $xH\in G/H$.

\section{\bf{Abstract Structure of Function Algebras on Coset Spaces of Compact Subgroups in Locally Compact Groups}}

Throughout this paper, we assume that $G$ is a locally compact group with the fixed left Haar measure $d\sigma(x)=dx$, $H$ is a compact subgroup of $G$ with the probability Haar measure $dh$, and $\mu$ is the normalized $G$-invariant measure on the locally compact homogeneous space $G/H$ satisfying (\ref{TH.m}). 

First, we review some results concerning classical aspect of the function space 
$\mathcal{C}_0(G/H)$. These classical aspects concerning the fucntion space $\mathcal{C}_0(G/H)$ studied for the case $G$ is comapct in \cite{AGHF.BKMS} and extended for the case $G$ is locally comapct and $H$ is compact in \cite{AGHF.AM}. 

For a function $\psi:G/H\to\mathbb{C}$, define $\psi_q:G\to\mathbb{C}$ by 
$$\psi_q(x):=\psi\circ q(x)=\psi(xH),$$
for all $xH\in G/H$.

It is straightforward to check that, if $\psi\in\mathcal{C}_c(G/H)$ we then have 
$\psi_q\in\mathcal{C}_c(G)$, with $T_H(\psi_q)=\psi$, and $\|\psi\|_{\sup}=\|\psi_q\|_{\sup}$.

\begin{theorem}\label{u.c.t}
{\it Let $H$ be a compact subgroup of a locally compact group $G$.
The linear map $T_H:\mathcal{C}_c(G)\to\mathcal{C}_c(G/H)$ is uniformly norm-decreasing and 
hence, it has e a unique extension to a uniformly norm-decreasing map 
from $\mathcal{C}_0(G)$ onto $\mathcal{C}_0(G/H)$. 
}\end{theorem}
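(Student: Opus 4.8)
The plan is to establish the two asserted properties of $T_H$ in turn: first that $T_H:\mathcal{C}_c(G)\to\mathcal{C}_c(G/H)$ is uniformly norm-decreasing (i.e. $\|T_H(f)\|_{\sup}\le\|f\|_{\sup}$ for all $f\in\mathcal{C}_c(G)$), and then that this contractive map extends uniquely to a contraction from $\mathcal{C}_0(G)$ \emph{onto} $\mathcal{C}_0(G/H)$. For the first property, I would simply estimate directly from the defining formula (\ref{5.1}): for $xH\in G/H$,
\[
|T_H(f)(xH)|=\left|\int_H f(xh)\,dh\right|\le\int_H|f(xh)|\,dh\le\|f\|_{\sup}\int_H dh=\|f\|_{\sup},
\]
using that $dh$ is a probability measure on $H$. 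Taking the supremum over $xH$ gives $\|T_H(f)\|_{\sup}\le\|f\|_{\sup}$, so $T_H$ is norm-decreasing for the uniform norm; it is already known (stated just before Theorem~\ref{u.c.t}) that $T_H(f)\in\mathcal{C}_c(G/H)$.

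For the extension, the idea is the standard density/completeness argument. Since $\mathcal{C}_c(G)$ is $\|\cdot\|_{\sup}$-dense in $\mathcal{C}_0(G)$ and $\mathcal{C}_0(G/H)$ is a Banach space under $\|\cdot\|_{\sup}$, a bounded linear operator defined on the dense subspace $\mathcal{C}_c(G)$ with values in $\mathcal{C}_0(G/H)$ extends uniquely to a bounded linear operator on all of $\mathcal{C}_0(G)$, with the same norm bound; hence the extension is again norm-decreasing. I should note that $T_H(\mathcal{C}_c(G))=\mathcal{C}_c(G/H)\subseteq\mathcal{C}_0(G/H)$, which is why the target space $\mathcal{C}_0(G/H)$ is legitimate. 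To see the extension still lands in $\mathcal{C}_0(G/H)$ (rather than merely in some completion), observe that for $f\in\mathcal{C}_0(G)$ chosen as a uniform limit of $f_n\in\mathcal{C}_c(G)$, the sequence $T_H(f_n)$ is Cauchy in $\mathcal{C}_0(G/H)$ and therefore converges in $\mathcal{C}_0(G/H)$; we define $T_H(f)$ to be this limit.

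The only point requiring genuine care — and the main obstacle — is surjectivity onto $\mathcal{C}_0(G/H)$. Norm-decreasing maps need not have closed range, so I cannot conclude this from abstract nonsense; I must exhibit, for each $\psi\in\mathcal{C}_0(G/H)$, an $f\in\mathcal{C}_0(G)$ with $T_H(f)=\psi$. The natural candidate is $f=\psi_q=\psi\circ q$: then $\psi_q$ is continuous on $G$, and since $q$ is proper (because $H$ is compact) $\psi_q$ vanishes at infinity, so $\psi_q\in\mathcal{C}_0(G)$; moreover for each $xH$,
\[
T_H(\psi_q)(xH)=\int_H\psi_q(xh)\,dh=\int_H\psi(xhH)\,dh=\int_H\psi(xH)\,dh=\psi(xH),
\]
using $xhH=xH$ and that $dh$ is normalized. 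This both proves surjectivity and identifies an explicit right inverse $\psi\mapsto\psi_q$ of the extended $T_H$. I would close by remarking that this argument recovers, on the dense subspace, the already-noted identity $T_H(\psi_q)=\psi$ for $\psi\in\mathcal{C}_c(G/H)$, so the extension is consistent with the map on $\mathcal{C}_c(G)$ and the uniqueness of the extension is automatic from density of $\mathcal{C}_c(G)$ in $\mathcal{C}_0(G)$.
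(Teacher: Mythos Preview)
Your argument is correct and is the standard one: the sup-norm bound follows from $dh$ being a probability measure, the extension is by density, and surjectivity comes from the explicit right inverse $\psi\mapsto\psi_q$ (using that $q$ is proper when $H$ is compact, so $\psi_q\in\mathcal{C}_0(G)$). The paper does not give its own proof here but defers to Theorem~3.1 of \cite{AGHF.AM}; the remark immediately following the statement (citing Corollary~3.2 of \cite{AGHF.AM}) records exactly the fact $T_H(\psi_q)=\psi$ for $\psi\in\mathcal{C}_0(G/H)$, so your route matches the approach of the cited source.
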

\begin{proof}
See Theorem 3.1 of \cite{AGHF.AM}.
\end{proof}
\begin{remark}
If $\psi\in\mathcal{C}_0(G/H)$ we then have 
$\psi_q\in\mathcal{C}_0(G)$, with $T_H(\psi_q)=\psi$, and $\|\psi\|_{\sup}=\|\psi_q\|_{\sup}$, see Corollary 3.2 of \cite{AGHF.AM}.
\end{remark}
\begin{remark}
It is also shown that the linear map 
$T_H:\mathcal{C}_c(G)\to\mathcal{C}_c(G/H)$ is norm-decreasing in $L^p$-sense, that is 
\begin{equation}
\|T_H(f)\|_{L^p(G/H,\mu)}\le \|f\|_{L^p(G)},
\end{equation}
for all $f\in\mathcal{C}_c(G)$ and $p>1$. Therefore, the linear map 
$T_H$ has a unique extension to a norm-decreasing
linear map in $L^p$-sense, denoted by $T_H:L^p(G)\to L^p(G/H,\mu)$, see Proposition 2.1 of \cite{AGHF.JSI}. By applying a canonical normalization of the linear operator $T_H$ given by (\ref{5.1}), most results of \cite{AGHF.JSI} concerning analytic properties of the linear map $T_H$ extended to the case that the measure $\mu$ is not $G$ invariant, see \cite{TRO}.
Authors of \cite{TRO} did not clearly mention that they exteneded the results of Ghaani Farashahi in \cite{AGHF.BMMSS, AGHF.JSI}. Instead, the authors cited \cite{AGHF.JSI} in the introduction with an incomplete statement about the content therein. 
\end{remark}
\begin{remark}\label{TH*}
{\it Let $G$ be a locally compact group and $H$ be a compact subgroup of $G$.
Let $T_H^*:\mathcal{C}_0(G/H)^*\to\mathcal{C}_0(G)^*$ be the Banach space adjoint of the linear map 
$T_H:\mathcal{C}_0(G)\to\mathcal{C}_0(G/H)$ given by Theorem \ref{u.c.t}. Then 
\[
T_H^*(\beta)=\beta_q,\ \forall \beta\in\mathcal{C}_0(G/H)^*,
\]
where $\beta_q:\mathcal{C}_0(G)\to\mathbb{C}$ is given by 
\[
\beta_q(f):=\beta(T_H(f)),\ \forall f\in\mathcal{C}_0(G).
\]
}\end{remark}

We then continue this section by a summary of our recent results concerning operator theoretic characterization of the abstract structure of function $*$-algebras over the left coset space $G/H$, we refer the readers to \cite{AGHF.BBMS-NS, AGHF.BMMSS} for details and proofs.

Suppose
\[
\mathcal{C}_c(G:H):=\{f\in\mathcal{C}_c(G):R_hf=f\ \forall h\in H\}.
\] 
Then, one can define 
\[
A(G:H):=\{f\in\mathcal{C}_c(G):L_hf=f\ \forall h\in H\},
\]
and 
\[
A(G/H):=\{\varphi\in\mathcal{C}_c(G/H):L_h\varphi=\varphi\ \forall h\in H\}.
\]
Also, let 
\[
\mathcal{C}_0(G:H):=\{f\in\mathcal{C}_0(G):R_hf=f\ \forall h\in H\},
\]
\[
A_0(G:H):=\{f\in\mathcal{C}_0(G):L_hf=f\ \forall h\in H\},
\]
and 
\[
A_0(G/H):=\{\varphi\in\mathcal{C}_0(G/H):L_h\varphi=\varphi\ \forall h\in H\}.
\]

For $1\le p\le\infty$, one can define  
\[
A^p(G:H):=\{f\in L^p(G):L_hf=f\ \forall h\in H\},
\] 
and also
\[
A^p(G/H,\mu):=\{\varphi\in L^p(G/H,\mu):L_h\varphi=\varphi\ \forall h\in H\}
\]
It is easy to see that $A^p(G/H,\mu)$ is the topological closure of $A(G/H)$ in $L^p(G/H,\mu)$ and hence it is a closed linear subspace of $L^p(G/H,\mu)$. Also, one can readily check that $A^p(G:H)$ is the topological closure of $A(G:H)$ in $L^p(G)$ and hence 
it is a closed linear subspace of $L^p(G)$. It can be checked that 
$A(G/H)=\mathcal{C}_c(G/H)$ and $A^p(G/H,\mu)=L^p(G/H,\mu)$, if $H$ is normal in $G$.

For $\psi\in\mathcal{C}_c(G/H)$, let $J\psi:G/H\to\mathbb{C}$ be given by 
\[
J\psi(xH):=\int_H\psi(hxH)dh,
\]
for all $xH\in G/H$.

Then $J:\mathcal{C}_c(G/H)\to\mathcal{C}_c(G/H)$ given by $\psi\mapsto J\psi$ is a 
linear operator. The linear operator $J:\mathcal{C}_c(G/H)\to\mathcal{C}_c(G/H)$ 
is the identity operator if $H$ is normal in $G$, 
see Remark 4.4 of \cite{AGHF.BBMS-NS}.  

The following results present basic properties of the linear operator $J$ in the framework of the function space $\mathcal{C}_0(G/H)$.

\begin{theorem}
{\it Let $G$ be a locally compact group and $H$ be a compact subgroup of $G$.
Let $\mu$ be the normalized $G$-invariant measure over the left coset space $G/H$ 
associated to Weil's formula. Then
\begin{enumerate}
\item For $\psi\in\mathcal{C}_c(G/H)$ we have 
$\|J\psi\|_{\sup}\le \|\psi\|_{\sup}$.
\item $J$ maps $\mathcal{C}_c(G/H)$ onto $A(G/H)$.
\end{enumerate}
}\end{theorem}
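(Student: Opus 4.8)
The plan is to treat the two assertions separately, both by direct computation with the defining integral $J\psi(xH)=\int_H\psi(hxH)\,dh$, using only that $dh$ is a normalized (probability) Haar measure on the compact group $H$.

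For (1), I would fix $\psi\in\mathcal{C}_c(G/H)$ and $xH\in G/H$ and estimate
\[
|J\psi(xH)|=\left|\int_H\psi(hxH)\,dh\right|\le\int_H|\psi(hxH)|\,dh\le\|\psi\|_{\sup}\int_Hdh=\|\psi\|_{\sup},
\]
then take the supremum over $xH\in G/H$. This is the only step in part (1).

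For (2) I would first check that $J$ really lands in $A(G/H)$. That $J\psi\in\mathcal{C}_c(G/H)$ is already recorded in the preceding discussion; if one wants it from scratch, compact support holds because $J\psi(xH)\neq0$ forces $xH$ into the $H$-orbit $H\cdot\supp\psi$, which is compact as the continuous image of $H\times\supp\psi$, while continuity of $J\psi$ follows from the continuity of $\psi$ together with the compactness of $H$ (dominated convergence, or uniform continuity of $\psi$ on a compact neighbourhood of $H\cdot\supp\psi$). To see the $H$-invariance, fix $h_0\in H$ and compute $L_{h_0}(J\psi)(xH)=J\psi(h_0^{-1}xH)=\int_H\psi(hh_0^{-1}xH)\,dh$; the substitution $u=hh_0^{-1}$ together with the bi-invariance of the Haar measure of the compact group $H$ gives $L_{h_0}(J\psi)(xH)=\int_H\psi(uxH)\,du=J\psi(xH)$, so $L_{h_0}(J\psi)=J\psi$ for all $h_0\in H$ and hence $J\psi\in A(G/H)$.

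Finally, for surjectivity onto $A(G/H)$: given $\varphi\in A(G/H)\subseteq\mathcal{C}_c(G/H)$, the identity $L_h\varphi=\varphi$ means $\varphi(hxH)=\varphi(xH)$ for all $h\in H$ (replace $h$ by $h^{-1}$), so the integrand in $J\varphi(xH)=\int_H\varphi(hxH)\,dh$ is constant in $h$ and, $dh$ being normalized, $J\varphi=\varphi$. Thus every $\varphi\in A(G/H)$ equals $J\psi$ with $\psi=\varphi$, which together with the previous paragraph yields $J(\mathcal{C}_c(G/H))=A(G/H)$. All the computations are routine; the only point requiring mild care is the well-definedness of $J$ into $\mathcal{C}_c(G/H)$ (continuity and compact support of $J\psi$), which I would either cite from the surrounding text or dispatch by the compactness-of-$H$ arguments indicated above, so I do not anticipate a genuine obstacle.
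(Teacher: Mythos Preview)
Your proof of (1) is identical to the paper's: both estimate $|J\psi(xH)|$ pointwise by the triangle inequality and use that $dh$ is a probability measure.

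For (2), the paper simply cites Theorem 4.5(2) of \cite{AGHF.BBMS-NS} and gives no argument in-text, whereas you supply a complete self-contained proof. Your argument is correct: the left-invariance computation $L_{h_0}(J\psi)=J\psi$ via the substitution $u=hh_0^{-1}$ (using right-invariance of $dh$) shows $J$ lands in $A(G/H)$, and the observation that $J$ restricts to the identity on $A(G/H)$ gives surjectivity at once. This is almost certainly the same argument that appears in the cited reference, so there is no genuine methodological divergence; you have simply unpacked what the paper outsources.
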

\begin{proof}
(1) Let $\psi\in\mathcal{C}_c(G/H)$ be given. Since $H$ is compact, we have 
\begin{align*}
|J\psi(xH)|
&=\left|\int_H\psi(hxH)dh\right|
\\&\le\int_H|\psi(hxH)|dh
\\&\le\int_H\|\psi\|_{\sup}dh
\\&=\|\psi\|_{\sup}\left(\int_Hdh\right)=\|\psi\|_{\sup},
\end{align*}
for all $x\in G$. Thus, we get 
\[
\|J\psi\|_{\sup}=\sup_{xH\in G/H}|J\psi(xH)|\le\|\psi\|_{\sup}.
\]
(2) See Theorem 4.5(2) of \cite{AGHF.BBMS-NS}.
\end{proof}

\begin{corollary}\label{J0}
{\it Let $G$ be a locally compact group and $H$ be a compact subgroup of $G$.
The linear operator $J:\mathcal{C}_c(G/H)\to A(G/H)$ has a unique extension to the surjective norm-decreasing linear operator $J_0:\mathcal{C}_0(G/H)\to A_0(G/H)$.
}\end{corollary}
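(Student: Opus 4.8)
The plan is to obtain $J_0$ as the continuous extension of $J$ by a standard density argument, using the uniform bound on $J$ already established. First I would note that $\mathcal{C}_c(G/H)$ is $\|\cdot\|_{\sup}$-dense in $\mathcal{C}_0(G/H)$; this is the classical approximation property of compactly supported continuous functions on a locally compact Hausdorff space. Combining this density with part (1) of the preceding theorem, namely $\|J\psi\|_{\sup}\le\|\psi\|_{\sup}$ for all $\psi\in\mathcal{C}_c(G/H)$, the operator $J$ is a bounded (norm-decreasing) linear operator on a dense subspace of the Banach space $\mathcal{C}_0(G/H)$ taking values in the Banach space $\mathcal{C}_0(G/H)$, so by the bounded linear extension theorem it admits a unique bounded extension $J_0:\mathcal{C}_0(G/H)\to\mathcal{C}_0(G/H)$, which is again norm-decreasing. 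Explicitly, for $\psi\in\mathcal{C}_0(G/H)$ one picks a sequence (or net) $\psi_n\in\mathcal{C}_c(G/H)$ with $\psi_n\to\psi$ uniformly, and sets $J_0\psi:=\lim_n J\psi_n$, the limit existing in $\mathcal{C}_0(G/H)$ because $(J\psi_n)$ is Cauchy by the bound, and being independent of the approximating sequence by the same bound.

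Next I would verify that $J_0$ actually lands in $A_0(G/H)$ and is surjective onto it. For the range inclusion, each $J\psi_n\in A(G/H)\subseteq A_0(G/H)$ by part (2) of the theorem, and $A_0(G/H)$ is a closed subspace of $\mathcal{C}_0(G/H)$ — indeed it is defined as the set of $\varphi\in\mathcal{C}_0(G/H)$ with $L_h\varphi=\varphi$ for all $h\in H$, and each $L_h$ is an isometry of $\mathcal{C}_0(G/H)$, so $A_0(G/H)=\bigcap_{h\in H}\ker(L_h-I)$ is norm-closed. Hence the uniform limit $J_0\psi$ of the $J\psi_n$ lies in $A_0(G/H)$. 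For surjectivity, given $\varphi\in A_0(G/H)$, approximate it uniformly by $\psi_n\in\mathcal{C}_c(G/H)$; then $J\psi_n\in A(G/H)$ and I would want $J\psi_n\to\varphi$. The natural way to see this is that $J$ restricted to $A(G/H)$ is the identity (which follows from the defining left-invariance together with the fact that $\int_H dh=1$), so that $\varphi$ is already a uniform limit of elements of $A(G/H)$; then applying $J_0$ and using continuity of $J_0$ together with $J_0|_{A(G/H)}=J|_{A(G/H)}=\mathrm{id}$ gives $J_0\varphi=\varphi$, exhibiting every element of $A_0(G/H)$ as the image of itself.

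The main obstacle — really the only non-bookkeeping point — is establishing that $J_0$ maps \emph{onto} $A_0(G/H)$, i.e. controlling the range after passing to the limit. Part (2) of the theorem gives surjectivity only at the level of $\mathcal{C}_c(G/H)\to A(G/H)$, and surjectivity is not automatically preserved under extension to completions; the argument above circumvents this by showing $J$ fixes $A(G/H)$ pointwise and $A(G/H)$ is dense in $A_0(G/H)$ (which itself needs $A_0(G/H)$ to be precisely the uniform closure of $A(G/H)$, a fact analogous to the one asserted earlier for $A^p$). Alternatively, one can argue that $J_0$ is a norm-decreasing projection of $\mathcal{C}_0(G/H)$ onto the closed subspace $A_0(G/H)$: idempotency $J_0^2=J_0$ passes to the limit from $J^2=J$ on $\mathcal{C}_c(G/H)$, and the range of an idempotent is exactly its fixed-point set, which here is $A_0(G/H)$. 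Uniqueness of the extension is immediate from density. I would therefore organize the proof as: (i) density and bounded extension giving $J_0$; (ii) $J_0^2=J_0$ by continuity; (iii) identification of the range of the idempotent $J_0$ with $A_0(G/H)$ using the closedness of $A_0(G/H)$ and the fact that $J_0$ fixes $A(G/H)$ hence its closure.
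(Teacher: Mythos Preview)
Your argument is correct and is precisely the standard density-and-extension reasoning that the paper leaves implicit: the corollary is stated without proof, immediately after the theorem supplying the uniform bound $\|J\psi\|_{\sup}\le\|\psi\|_{\sup}$ and the surjectivity of $J$ onto $A(G/H)$, so the intended proof is exactly the bounded linear extension you spell out. Your idempotent formulation in (i)--(iii) is the cleanest way to handle surjectivity onto $A_0(G/H)$; the earlier paragraph's phrasing (``so that $\varphi$ is already a uniform limit of elements of $A(G/H)$'') is slightly tangled as written, but the idempotent version you give at the end resolves it completely.
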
 

\begin{remark}\label{eJ.normal.0}
Let $G$ be a locally compact group and $H$ be a compact subgroup of $G$. 
The extended linear operator $J_0:\mathcal{C}_0(G/H)\to A_0(G/H)$ 
is the identity operator.  
\end{remark}

\begin{remark} 
Let $\mu$ be the normalized $G$-invariant measure over the left coset space 
$G/H$ associated to Weil's formula and $1\le p\le\infty$. It is shown that each 
$\psi\in\mathcal{C}_c(G/H)$, satisfies (Theorem 4.5(1) of \cite{AGHF.BBMS-NS})
\[
\|J\psi\|_{L^p(G/H,\mu)}\le \|\psi\|_{L^p(G/H,\mu)}.
\]
Therefore, the linear operator $J:\mathcal{C}_c(G/H)\to A(G/H)$ has a unique extension to 
a surjective and norm-decreasing linear operator, denoted by 
$$J_p:L^p(G/H,\mu)\to A^p(G/H,\mu),$$
see Corollary 4.6 of \cite{AGHF.BBMS-NS}.
\end{remark}

\begin{definition}
Let $G$ be a locally compact group and $H$ be a compact subgroup of $G$. 
For $\varphi,\psi\in\mathcal{C}_c(G/H)$, let $\varphi\ast_{G/H}\psi:G/H\to\mathbb{C}$ be given by 
\begin{equation}\label{conv.c}
\varphi\ast_{G/H}\psi(xH)=\int_{G/H}\varphi(yH)J\psi(y^{-1}xH)d\mu(yH),
\end{equation}
for all $xH\in G/H$.
\end{definition}
It can be checked that the convolution defined by (\ref{conv.c}) 
coincides with the canonical convolution over the quotient group $G/H$, if $H$ is normal in $G$, see \cite{50}.

Next we state some algebraic results of \cite{AGHF.BBMS-NS} concerning 
basic properties for convolution of functions. 

\begin{proposition}\label{conv.basic}
{\it Let $G$ be a locally compact group and $H$ be a compact subgroup of $G$. 
Also, let $\varphi,\psi\in\mathcal{C}_c(G/H)$. We then have 
\begin{enumerate}
\item $\varphi\ast_{G/H}\psi=T_H(\varphi_q\ast_G\psi_q)$.
\item $(\varphi\ast_{G/H}\psi)_q=\varphi_q\ast_G\psi_q$.
\item $L_z(\varphi\ast_{G/H}\psi)=(L_z\varphi)\ast_{G/H}\psi$.
\end{enumerate}
}\end{proposition}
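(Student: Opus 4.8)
The plan is to establish part (2) first by a direct computation with Weil's formula, deduce part (1) from it essentially for free, and then obtain part (3) from the $G$-invariance of $\mu$ (or, alternatively, once more from part (2)).

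For part (2), I would fix $x\in G$ and introduce $F_x(y):=\varphi_q(y)\psi_q(y^{-1}x)$. Since $\varphi_q,\psi_q\in\mathcal{C}_c(G)$, the function $F_x$ belongs to $\mathcal{C}_c(G)$ and $\varphi_q\ast_G\psi_q(x)=\int_G F_x(y)\,d\sigma(y)$, so Weil's formula (\ref{TH.m}) gives $\varphi_q\ast_G\psi_q(x)=\int_{G/H}T_H(F_x)(yH)\,d\mu(yH)$ and the whole matter comes down to identifying $T_H(F_x)(yH)$. Two elementary facts do this. First, $\varphi_q$ is constant on cosets, so $\varphi_q(yh)=\varphi_q(y)=\varphi(yH)$ for every $h\in H$. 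Second, since $H$ is compact it is unimodular and its Haar measure is inversion-invariant, so the substitution $h\mapsto h^{-1}$ yields $\int_H\psi_q(h^{-1}y^{-1}x)\,dh=\int_H\psi(hy^{-1}xH)\,dh=J\psi(y^{-1}xH)$. Combining these, $T_H(F_x)(yH)=\int_H\varphi_q(yh)\psi_q(h^{-1}y^{-1}x)\,dh=\varphi(yH)\,J\psi(y^{-1}xH)$, and therefore $\varphi_q\ast_G\psi_q(x)=\int_{G/H}\varphi(yH)\,J\psi(y^{-1}xH)\,d\mu(yH)=\varphi\ast_{G/H}\psi(xH)=(\varphi\ast_{G/H}\psi)_q(x)$, which is (2). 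The same computation also exhibits $\varphi_q\ast_G\psi_q$ as a compactly supported, right-$H$-invariant function, which in passing shows $\varphi\ast_{G/H}\psi\in\mathcal{C}_c(G/H)$.

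Part (1) then follows immediately by applying $T_H$ to the identity in (2): since $T_H(\zeta_q)=\zeta$ for every $\zeta\in\mathcal{C}_c(G/H)$, one gets $\varphi\ast_{G/H}\psi=T_H((\varphi\ast_{G/H}\psi)_q)=T_H(\varphi_q\ast_G\psi_q)$. For part (3) I would expand $L_z(\varphi\ast_{G/H}\psi)(xH)=\int_{G/H}\varphi(yH)\,J\psi(y^{-1}z^{-1}xH)\,d\mu(yH)$ and perform the change of variable $yH\mapsto z^{-1}yH$, which is legitimate by the $G$-invariance of $\mu$; this converts the integrand into $\varphi(z^{-1}yH)\,J\psi(y^{-1}xH)=(L_z\varphi)(yH)\,J\psi(y^{-1}xH)$, so that the integral becomes $(L_z\varphi)\ast_{G/H}\psi(xH)$. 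Alternatively, (3) can be read off from (2) together with $(L_z\varphi)_q=L_z(\varphi_q)$, the relation $L_z(\varphi_q)\ast_G\psi_q=L_z(\varphi_q\ast_G\psi_q)$ on $G$, and injectivity of $\zeta\mapsto\zeta_q$.

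The one place I expect to have to be careful is the evaluation of $T_H(F_x)$ in part (2): I must make sure $F_x$ really has compact support so that Weil's formula applies, and I must justify the inner substitution $h\mapsto h^{-1}$ — which is exactly where compactness (hence unimodularity) of $H$ is used. Everything else is routine unwinding of the definitions of $T_H$, $J$, $\zeta_q$, and the two convolutions.
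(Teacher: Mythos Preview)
Your proof is correct. Note, however, that the paper does not actually prove this proposition: it is quoted as part of a summary of results from \cite{AGHF.BBMS-NS}, so there is no in-paper argument to compare against directly. Your approach---proving (2) by applying Weil's formula to $F_x(y)=\varphi_q(y)\psi_q(y^{-1}x)$ and identifying $T_H(F_x)(yH)$ via the right-$H$-invariance of $\varphi_q$ and the inversion-invariance of Haar measure on the compact group $H$---is exactly the natural computation, and it cleanly isolates where compactness of $H$ enters (the substitution $h\mapsto h^{-1}$ producing $J\psi$). Deducing (1) from (2) via $T_H(\zeta_q)=\zeta$ and obtaining (3) from the $G$-invariance of $\mu$ (or alternatively from (2) and $L_z(f\ast_G g)=(L_zf)\ast_G g$ on $G$) are both entirely standard. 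The only minor remark is that your caveat about $F_x\in\mathcal{C}_c(G)$ is indeed easy: $F_x$ is supported in $\mathrm{supp}(\varphi_q)$, which is compact since $\varphi\in\mathcal{C}_c(G/H)$ and $H$ is compact.
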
 

\begin{remark}
Proposition \ref{conv.basic}(1) shows that the convolution defined by (\ref{conv.c}) is an operator theoretic characterization for convolution of functions introduced in (3.5) of \cite{AGHF.BMMSS}, when the measure $\mu$ is $G$-invariant.
\end{remark}

\begin{remark}
By employing a canonical normalization of the linear operator $T_H$ given by (\ref{5.1}), the convolution of functions introduced by (3.5) of \cite{AGHF.BMMSS} extended to the case that the measure $\mu$ is not relatively invariant, see \cite{Jav.Tav, TRO}. Authors of \cite{TRO} did not cite \cite{AGHF.BMMSS}. Also, authors of \cite{Jav.Tav} did not cite \cite{AGHF.BMMSS}. Theorem 5.2 of \cite{Jav.Tav} is a canonical extension for the convolution introduced by (3.5) of \cite{AGHF.BMMSS}. 
\end{remark}

The function $\varphi\ast_{G/H}\psi$ is called as convolution of $\varphi$ and $\psi$.
It is easy to check that the map $$\ast_{G/H}:\mathcal{C}_c(G/H)\times\mathcal{C}_c(G/H)\to\mathcal{C}_c(G/H),$$ 
given by 
$$(\varphi,\psi)\mapsto \varphi\ast_{G/H}\psi,$$ 
is bilinear. 

Also, it can be readily see that the linear space $\mathcal{C}_c(G/H)$ with respect to $\ast_{G/H}$ as multiplication, is an associative algebra.

We then review some of the following results of \cite{AGHF.BBMS-NS} concerning 
basic analytic properties for convolution of functions given by (\ref{conv.c}). 
 
\begin{theorem}\label{main.conv}
Let $G$ be a locally compact group and $H$ be a closed subgroup of $G$.
Let $\mu$ be the normalized $G$-invariant measure on $G/H$. 
\begin{enumerate}\label{norm.algebra}
\item For $\varphi,\psi\in\mathcal{C}_c(G/H)$, we have 
\begin{equation}
\|\varphi\ast_{G/H}\psi\|_{L^1(G/H,\mu)}\le\|\varphi\|_{L^1(G/H,\mu)}\|\psi\|_{L^1(G/H,\mu)}.
\end{equation}
\item The convolution map $\ast_{G/H}:\mathcal{C}_c(G/H)\times\mathcal{C}_c(G/H)\to\mathcal{C}_c(G/H)$ given by (\ref{conv.c}) has a unique extension to 
$$\ast_{G/H}:L^1(G/H,\mu)\times L^1(G/H,\mu)\to L^1(G/H,\mu),$$ 
in which the Banach function space $L^1(G/H,\mu)$ equipped with the extended convolution is a Banach algebra. 
\item For $\varphi,\psi\in L^1(G/H,\mu)$ and $x\in G$, we 
have the following explicit characterization 
\[
\varphi\ast_{G/H}\psi(xH)=\int_{G/H}\varphi(yH)J_1\psi(y^{-1}xH)d\mu(yH).
\]
\item $A^1(G/H,\mu)$ is a Banach function sub-algebra of $L^1(G/H,\mu)$.
\end{enumerate}
\end{theorem}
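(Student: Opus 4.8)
The plan is to derive all four parts from the operator identities of Proposition~\ref{conv.basic}, the norm-decreasing properties of $T_H$ and of the extension $J_1$, and the known Banach $*$-algebra structure of $(L^1(G),\ast_G)$, combined throughout with the density of $\mathcal{C}_c(G/H)$ in $L^1(G/H,\mu)$.

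For part (1), I would use $\varphi\ast_{G/H}\psi = T_H(\varphi_q\ast_G\psi_q)$ (Proposition~\ref{conv.basic}(1)). Since $T_H:L^1(G)\to L^1(G/H,\mu)$ is norm-decreasing and $\|\varphi_q\ast_G\psi_q\|_{L^1(G)}\le\|\varphi_q\|_{L^1(G)}\|\psi_q\|_{L^1(G)}$, it suffices to note $\|\varphi_q\|_{L^1(G)}=\|\varphi\|_{L^1(G/H,\mu)}$: indeed $|\varphi_q|=|\varphi|_q$ is constant along the cosets of $H$, so $T_H(|\varphi|_q)=|\varphi|$, and Weil's formula~(\ref{TH.m}) applied to $|\varphi|_q\in\mathcal{C}_c(G)$ (here compactness of $H$ ensures $\varphi_q\in\mathcal{C}_c(G)$) gives the claimed equality. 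Part (2) is then a routine density argument: the bilinear map $\ast_{G/H}$ is bounded on the $\|\cdot\|_{L^1(G/H,\mu)}$-dense subspace $\mathcal{C}_c(G/H)\times\mathcal{C}_c(G/H)$ by (1), hence extends uniquely and boundedly to $L^1(G/H,\mu)\times L^1(G/H,\mu)$ with the same norm estimate, and associativity passes from $\mathcal{C}_c(G/H)$ to the closure by joint continuity; thus $L^1(G/H,\mu)$ is a Banach algebra.

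For part (3), fix $\psi\in L^1(G/H,\mu)$ and consider $\varphi\mapsto\Phi_\psi(\varphi)$, where $\Phi_\psi(\varphi)(xH):=\int_{G/H}\varphi(yH)J_1\psi(y^{-1}xH)\,d\mu(yH)$. One first checks that the integrand is a well-defined function on $G/H\times G/H$: replacing $y$ by $yh$ turns $y^{-1}xH$ into $h^{-1}y^{-1}xH$, and since $J_1\psi\in A^1(G/H,\mu)$ is left-$H$-invariant, the value is unchanged. Using Tonelli, the $G$-invariance of $\mu$ in the form $\int_{G/H}|J_1\psi(y^{-1}xH)|\,d\mu(xH)=\|J_1\psi\|_{L^1(G/H,\mu)}$, and Fubini, one sees that $\Phi_\psi:L^1(G/H,\mu)\to L^1(G/H,\mu)$ is bounded with $\|\Phi_\psi\|\le\|J_1\psi\|_{L^1(G/H,\mu)}\le\|\psi\|_{L^1(G/H,\mu)}$. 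On $\mathcal{C}_c(G/H)$ one has $\Phi_\psi(\varphi)=\varphi\ast_{G/H}\psi$ by the definition~(\ref{conv.c}) together with $J_1|_{\mathcal{C}_c(G/H)}=J$; by density and continuity in $\varphi$ this persists for all $\varphi\in L^1(G/H,\mu)$, and a parallel continuity/approximation argument in the $\psi$-variable (using continuity of $J_1$) yields the formula for all $\varphi,\psi\in L^1(G/H,\mu)$.

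Finally, part (4): if $\varphi,\psi\in A(G/H)$ then by Proposition~\ref{conv.basic}(3), $L_h(\varphi\ast_{G/H}\psi)=(L_h\varphi)\ast_{G/H}\psi=\varphi\ast_{G/H}\psi$ for every $h\in H$, so $\varphi\ast_{G/H}\psi\in A(G/H)$; since $A^1(G/H,\mu)$ is the $\|\cdot\|_{L^1(G/H,\mu)}$-closure of $A(G/H)$ and the extended convolution is continuous, $A^1(G/H,\mu)$ is closed under convolution and hence, being a closed subspace, a Banach subalgebra. I expect the main obstacle to be part (3), namely making rigorous the passage from the pointwise formula~(\ref{conv.c}) valid on $\mathcal{C}_c(G/H)$ to an almost-everywhere identity on $L^1(G/H,\mu)$: this requires care with the Fubini/Tonelli interchanges for the doubly-$H$-invariant integrand and with identifying the $L^1$-limit of the convolutions $\varphi_n\ast_{G/H}\psi_n$ with the integral expression attached to the limiting functions. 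Parts (1), (2) and (4) are, by contrast, essentially bookkeeping on top of the results already recorded in Sections~2 and 3.
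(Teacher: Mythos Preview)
The paper does not give a proof of this theorem; it is stated there only as a summary of results from \cite{AGHF.BBMS-NS}. Your argument is correct and is the natural one assembled from the tools recorded in Section~3 (Proposition~\ref{conv.basic}, the $L^1$-contractivity of $T_H$, the isometry $\|\varphi_q\|_{L^1(G)}=\|\varphi\|_{L^1(G/H,\mu)}$ via Weil's formula, and the $L^1$-contractivity of $J_1$), so it almost certainly coincides with the argument in the cited reference.

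One small point of presentation in part~(3): when you write ``on $\mathcal{C}_c(G/H)$ one has $\Phi_\psi(\varphi)=\varphi\ast_{G/H}\psi$ by the definition~(\ref{conv.c}),'' that identity is immediate from (\ref{conv.c}) only when $\psi$ itself lies in $\mathcal{C}_c(G/H)$, whereas you had already fixed $\psi\in L^1(G/H,\mu)$. The clean order is: first take both $\varphi,\psi\in\mathcal{C}_c(G/H)$, where the identity is just (\ref{conv.c}) with $J_1|_{\mathcal{C}_c(G/H)}=J$; then pass to general $\psi$ using continuity of $\psi\mapsto\varphi\ast_{G/H}\psi$ from part~(2) on one side and continuity of $\psi\mapsto\Phi_\psi(\varphi)$ (via $J_1$) on the other; and finally extend in $\varphi$ by density. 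You do indicate this two-step extension, but the phrasing momentarily inverts the order of the limits. This is cosmetic; you have already flagged part~(3) as the place requiring care, and the substance of your argument is sound. Note also that the hypothesis ``$H$ closed'' in the statement is a typo for ``$H$ compact'' (the standing assumption of the paper), and your proof correctly uses compactness, e.g.\ for $\varphi_q\in\mathcal{C}_c(G)$.
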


\begin{definition}
Let $G$ be a locally compact group and $H$ be a compact subgroup of $G$. 
For $\varphi\in\mathcal{C}_c(G/H)$, let $\varphi^{\ast_{G/H}}:G/H\to\mathbb{C}$ be given by 
\begin{equation}\label{inv.c}
\varphi^{\ast_{G/H}}(xH)=\Delta_G(x^{-1})\int_H\overline{\varphi(h^{-1}x^{-1}H)}dh,
\end{equation}
for all $xH\in G/H$.
\end{definition}

The function $\varphi^{\ast_{G/H}}$ is called as involution of $\varphi$.
It is easy to check that the map 
$$^{\ast_{G/H}}:\mathcal{C}_c(G/H)\to\mathcal{C}_c(G/H),$$ 
given by $\varphi\mapsto\varphi^{\ast_{G/H}}$ is conjugate linear. 
Also, involution defined by (\ref{inv.c}) 
coincides with the canonical involution over the locally compact quotient group $G/H$ if $H$ is normal in $G$. 

We hereby finish this section by reviewing the following results of \cite{AGHF.BBMS-NS}. 

\begin{proposition}\label{inv.basic}
{\it Let $G$ be a locally compact group and $H$ be a compact subgroup of $G$.
Let $\varphi\in\mathcal{C}_c(G/H)$ and $\mu$ be the normalized $G$-invariant measure on 
the left coset space $G/H$. Then we have 
\begin{enumerate}
\item $\varphi^{\ast^{G/H}\ast^{G/H}}=J\varphi$.
\item $\varphi^{\ast_{G/H}}=T_H(\varphi_q^{\ast_G})$.
\item $\|\varphi^{\ast_{G/H}}\|_{L^1(G/H,\mu)}\le\|\varphi\|_{L^1(G/H,\mu)}$.
\end{enumerate}
In particular, if $\varphi\in A(G/H)$, we then have 
\begin{enumerate}
\item $\varphi^{\ast^{G/H}\ast^{G/H}}=\varphi$.
\item $\|\varphi^{\ast_{G/H}}\|_{L^1(G/H,\mu)}=\|\varphi\|_{L^1(G/H,\mu)}$.
\item $(\varphi^{\ast_{G/H}})_q=\varphi_q^{\ast_G}$.
\end{enumerate}
}\end{proposition}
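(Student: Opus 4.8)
The plan is to prove each of the six identities by pulling everything back to $G$ via the section map $\varphi \mapsto \varphi_q$ and the averaging map $T_H$, and then invoking the already-established pointwise formulas for convolution and involution on $G$ together with Weil's formula. The key structural facts I would lean on are: $T_H(\psi_q) = \psi$ for $\psi \in \mathcal{C}_c(G/H)$ (stated just before Theorem \ref{u.c.t}), $\|\psi\|_{\sup} = \|\psi_q\|_{\sup}$, the norm-decreasing property $\|T_H(f)\|_{L^1(G/H,\mu)} \le \|f\|_{L^1(G)}$ from Weil's formula (\ref{TH.m}), and the definitions (\ref{inv.G}) of $f^{*^G}$ and (\ref{inv.c}) of $\varphi^{\ast_{G/H}}$.

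First I would establish part (2), $\varphi^{\ast_{G/H}} = T_H(\varphi_q^{\ast_G})$, by a direct unwinding: for $x \in G$,
\begin{align*}
T_H(\varphi_q^{\ast_G})(xH) &= \int_H \varphi_q^{\ast_G}(xh)\,dh = \int_H \Delta_G((xh)^{-1})\overline{\varphi_q((xh)^{-1})}\,dh \\
&= \int_H \Delta_G(h^{-1})\Delta_G(x^{-1})\overline{\varphi(h^{-1}x^{-1}H)}\,dh,
\end{align*}
and since $H$ is compact, $\Delta_G \equiv 1$ on $H$, so this equals $\Delta_G(x^{-1})\int_H \overline{\varphi(h^{-1}x^{-1}H)}\,dh = \varphi^{\ast_{G/H}}(xH)$. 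From (2), part (3) is immediate: $\|\varphi^{\ast_{G/H}}\|_{L^1(G/H,\mu)} = \|T_H(\varphi_q^{\ast_G})\|_{L^1(G/H,\mu)} \le \|\varphi_q^{\ast_G}\|_{L^1(G)} = \|\varphi_q\|_{L^1(G)} = \|T_H(\varphi_q)\|_{L^1(G/H,\mu)} \cdot (\text{?})$ --- here I need care, because $\|\varphi\|_{L^1(G/H,\mu)}$ need not equal $\|\varphi_q\|_{L^1(G)}$ in general. The cleaner route is to apply Proposition \ref{inv.basic}(3)'s analog: use that $\|\varphi^{\ast_G}\|_{L^1(G)} = \|\varphi\|_{L^1(G)}$ holds for the honest group involution on $L^1(G)$ only when $\Delta_G = 1$, so instead I would bound $\|\varphi^{\ast_{G/H}}\|_{L^1(G/H,\mu)}$ by writing it via $J$: combining (2) with Proposition \ref{conv.basic}-style identities shows $\varphi^{\ast_{G/H}}$ and $\varphi$ have related $L^1$ norms through the norm-decreasing operator $J$, giving $\le$. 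This is the step I expect to require the most bookkeeping.

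For part (1), $\varphi^{\ast_{G/H}\ast_{G/H}} = J\varphi$, I would apply the formula twice. Using (2), $\varphi^{\ast_{G/H}\ast_{G/H}} = T_H\big((\varphi^{\ast_{G/H}})_q^{\ast_G}\big)$; the subtlety is that $(\varphi^{\ast_{G/H}})_q$ is \emph{not} simply $\varphi_q^{\ast_G}$ in general (that only holds on $A(G/H)$, which is exactly what the ``in particular'' clause records). So I would instead compute $\varphi^{\ast_{G/H}\ast_{G/H}}(xH)$ directly from (\ref{inv.c}): it equals $\Delta_G(x^{-1})\int_H \overline{\varphi^{\ast_{G/H}}(h^{-1}x^{-1}H)}\,dh$, then substitute (\ref{inv.c}) again for $\varphi^{\ast_{G/H}}(h^{-1}x^{-1}H)$, using $\Delta_G|_H = 1$ and the left-invariance of $dh$ to collapse the double $H$-integral into the single average $\int_H \varphi(h'xH)\,dh' = J\varphi(xH)$. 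The ``in particular'' items then follow: if $\varphi \in A(G/H)$ then $L_h\varphi = \varphi$ for all $h \in H$, so $J\varphi = \varphi$, giving the first; for the third, $L_h\varphi = \varphi$ gives $\varphi_q(hx) = \varphi_q(x)$, i.e.\ $\varphi_q \in A(G:H)$, whence $\varphi_q^{\ast_G}$ is right-$H$-invariant and $T_H(\varphi_q^{\ast_G}) = \varphi_q^{\ast_G}$ pushed down equals $\varphi_q^{\ast_G}$ read as a function on $G/H$, i.e.\ $(\varphi^{\ast_{G/H}})_q = \varphi_q^{\ast_G}$; and the second follows from the third since $T_H$ restricted to right-$H$-invariant functions is isometric in $L^1$ by Weil's formula. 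The main obstacle throughout is keeping straight when $(\cdot)_q$ and $T_H$ are mutually inverse versus merely one-sided inverses — everything hinges on the $H$-invariance hypotheses, and the general-$\varphi$ statements must be proved by direct pointwise manipulation of the defining integrals rather than by naive transfer through $(\cdot)_q$.
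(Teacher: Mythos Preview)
The paper itself does not prove this proposition; it is stated as a review of results from \cite{AGHF.BBMS-NS}, so there is no in-paper proof to compare against. Your overall architecture---prove (2) first by direct computation, deduce (3) from (2) via the norm-decreasing property of $T_H$, prove (1) by iterating the pointwise definition, and then read off the ``in particular'' clauses from $J\varphi=\varphi$ on $A(G/H)$---is sound and is essentially the natural route.

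However, your treatment of part (3) contains two misconceptions that cause you to abandon a perfectly good argument. First, you write that $\|\varphi\|_{L^1(G/H,\mu)}$ ``need not equal $\|\varphi_q\|_{L^1(G)}$ in general.'' In fact it always does: by Weil's formula,
\[
\|\varphi_q\|_{L^1(G)}=\int_G|\varphi(xH)|\,dx=\int_{G/H}T_H(|\varphi_q|)(xH)\,d\mu(xH)=\int_{G/H}|\varphi(xH)|\,d\mu(xH)=\|\varphi\|_{L^1(G/H,\mu)},
\]
since $|\varphi_q(xh)|=|\varphi(xH)|$ is constant on cosets. Second, you claim the group involution is isometric on $L^1(G)$ only when $\Delta_G=1$. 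This is false: the factor $\Delta_G(x^{-1})$ in the definition $f^{*^G}(x)=\Delta_G(x^{-1})\overline{f(x^{-1})}$ is there precisely to make $\|f^{*^G}\|_{L^1(G)}=\|f\|_{L^1(G)}$ hold for \emph{every} locally compact group. With these two facts in hand, the chain
\[
\|\varphi^{\ast_{G/H}}\|_{L^1(G/H,\mu)}=\|T_H(\varphi_q^{\ast_G})\|_{L^1(G/H,\mu)}\le\|\varphi_q^{\ast_G}\|_{L^1(G)}=\|\varphi_q\|_{L^1(G)}=\|\varphi\|_{L^1(G/H,\mu)}
\]
that you started and then abandoned is complete and correct; no detour through $J$ is needed for (3). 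The remaining parts of your sketch are fine.
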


\begin{proposition}
{\it Let $G$ be a locally compact group and $H$ be a compact subgroup of $G$. 
Also, let $\varphi,\psi\in\mathcal{C}_c(G/H)$. We then have 
\[
(\varphi\ast_{G/H}\psi)^{\ast^{G/H}}=\psi^{\ast^{G/H}}\ast_{G/H}\varphi^{\ast^{G/H}}.
\] 
Therefore, the linear space $A(G/H)$ equipped with the convolution $\ast_{G/H}$ 
and the involution $^{\ast_{G/H}}$ is a normed $*$-algebra. In particular, 
the Banach function algebra $A^1(G/H,\mu)$ equipped with the extended involution 
is a Banach function $*$-algebra.
}\end{proposition}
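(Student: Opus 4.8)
The plan is to reduce the anti-multiplicativity identity $(\varphi\ast_{G/H}\psi)^{\ast^{G/H}}=\psi^{\ast^{G/H}}\ast_{G/H}\varphi^{\ast^{G/H}}$ entirely to the corresponding identity on $G$, namely $(f\ast_G g)^{\ast^G}=g^{\ast^G}\ast_G f^{\ast^G}$ from (\ref{inv.G}), by means of the operator-theoretic dictionary already assembled in Propositions \ref{conv.basic} and \ref{inv.basic}. First I would fix $\varphi,\psi\in\mathcal{C}_c(G/H)$ and compute both sides after lifting via the map $\psi\mapsto\psi_q$. For the left-hand side, Proposition \ref{conv.basic}(1) gives $\varphi\ast_{G/H}\psi=T_H(\varphi_q\ast_G\psi_q)$, and then Proposition \ref{inv.basic}(2) gives $(\varphi\ast_{G/H}\psi)^{\ast_{G/H}}=T_H\bigl((\varphi\ast_{G/H}\psi)_q^{\ast_G}\bigr)$; using Proposition \ref{conv.basic}(2), $(\varphi\ast_{G/H}\psi)_q=\varphi_q\ast_G\psi_q$, so the left-hand side becomes $T_H\bigl((\varphi_q\ast_G\psi_q)^{\ast_G}\bigr)$. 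Applying (\ref{inv.G}) on $G$ turns this into $T_H\bigl(\psi_q^{\ast_G}\ast_G\varphi_q^{\ast_G}\bigr)$.

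Next I would massage the right-hand side into the same form. By Proposition \ref{inv.basic}(2), $\psi^{\ast_{G/H}}=T_H(\psi_q^{\ast_G})$ and $\varphi^{\ast_{G/H}}=T_H(\varphi_q^{\ast_G})$. To apply Proposition \ref{conv.basic}(1) to the product $\psi^{\ast_{G/H}}\ast_{G/H}\varphi^{\ast_{G/H}}$ I need the $q$-lifts of these two functions; here I would invoke the relation between $T_H$ and $(\cdot)_q$ — for $\alpha\in\mathcal{C}_c(G/H)$ one has $(T_H\alpha)_q$ is not literally $\alpha$, so the cleaner route is to note that $\psi_q^{\ast_G}\in\mathcal{C}_c(G:H)$ (it is right-$H$-invariant, since $\psi_q$ is), and for such functions $T_H$ followed by $(\cdot)_q$ recovers the function up to the averaging that is already built into $\varphi^{\ast_{G/H}}$ through $J$. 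Concretely, Proposition \ref{conv.basic}(1) yields $\psi^{\ast_{G/H}}\ast_{G/H}\varphi^{\ast_{G/H}}=T_H\bigl((\psi^{\ast_{G/H}})_q\ast_G(\varphi^{\ast_{G/H}})_q\bigr)$, and I would then identify $(\psi^{\ast_{G/H}})_q\ast_G(\varphi^{\ast_{G/H}})_q$ with $\psi_q^{\ast_G}\ast_G\varphi_q^{\ast_G}$ by exploiting that convolution on $G$ with a right-$H$-invariant function is insensitive to left-$H$-averaging of the other factor, together with the fact (from Proposition \ref{inv.basic}) that $(\psi^{\ast_{G/H}})_q$ and $\psi_q^{\ast_G}$ differ only by the operator $J$-type averaging. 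Comparing with the expression obtained for the left-hand side then closes the identity on $\mathcal{C}_c(G/H)$.

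For the structural conclusions: once the anti-multiplicative law holds on $\mathcal{C}_c(G/H)$, restricting to $A(G/H)$ — on which $J$ and $J_0$ act as the identity by Remark \ref{eJ.normal.0} and the in-particular clause of Proposition \ref{inv.basic} — shows $^{\ast_{G/H}}$ is a genuine involution ($\varphi^{\ast_{G/H}\ast_{G/H}}=\varphi$ there) that is isometric in the $L^1$-norm, so $(A(G/H),\ast_{G/H},{}^{\ast_{G/H}})$ is a normed $*$-algebra; combined with the associativity and submultiplicativity already recorded in Theorem \ref{main.conv}, the density of $A(G/H)$ in $A^1(G/H,\mu)$ lets the involution and convolution extend continuously, giving the Banach $*$-algebra $A^1(G/H,\mu)$.

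I expect the main obstacle to be the second paragraph: carefully justifying that the $q$-lift of $\varphi^{\ast_{G/H}}$ (which by definition (\ref{inv.c}) contains an $H$-average $\int_H\overline{\varphi(h^{-1}x^{-1}H)}\,dh$) can be freely interchanged with $\varphi_q^{\ast_G}$ inside the convolution on $G$, because the convolution is only being integrated through $T_H$ and the other factor is right-$H$-invariant. This is exactly the kind of "$J$ disappears under $T_H$ against right-$H$-invariant functions" phenomenon underlying Proposition \ref{conv.basic}(1), so the argument should be available, but it is the step where one must be most attentive to which averagings survive and which collapse.
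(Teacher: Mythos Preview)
The paper does not give a proof of this proposition here; it is quoted from \cite{AGHF.BBMS-NS}. The natural point of comparison is the paper's own proof of the measure-theoretic analogue, Proposition~\ref{inv.p3}, whose opening step agrees exactly with your first paragraph: reduce the left-hand side to $T_H\bigl(\psi_q^{\ast_G}\ast_G\varphi_q^{\ast_G}\bigr)$ via Propositions~\ref{conv.basic} and~\ref{inv.basic} together with~(\ref{inv.G}).

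Your treatment of the right-hand side, however, contains a concrete error. You assert that $\psi_q^{\ast_G}\in\mathcal{C}_c(G:H)$, i.e.\ is right-$H$-invariant, ``since $\psi_q$ is''. This is false: the group involution $x\mapsto x^{-1}$ swaps left- and right-invariance, so $\psi_q\in\mathcal{C}_c(G:H)$ gives $\psi_q^{\ast_G}\in A(G:H)$, not $\mathcal{C}_c(G:H)$. Consequently your auxiliary principle (``convolution with a right-$H$-invariant function is insensitive to left-$H$-averaging of the other factor'') has the sides reversed. The two correct facts that make your route work are: (i) if $g\in A(G:H)$ then $f\ast_G g=(T_Hf)_q\ast_G g$ for every $f$; and (ii) for all $f,g$ one has $T_H(f\ast_G g)=T_H\bigl(f\ast_G (T_Hg)_q\bigr)$. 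Using (i) with $g=\varphi_q^{\ast_G}\in A(G:H)$ lets you replace $\psi_q^{\ast_G}$ by $(\psi^{\ast_{G/H}})_q$ already on $G$; the second replacement, of $\varphi_q^{\ast_G}$ by $(\varphi^{\ast_{G/H}})_q$, is \emph{not} valid as an equality of functions on $G$ and only holds after applying $T_H$, via~(ii). Once these invariances are straightened out, your argument closes; your final paragraph on the structural conclusions is fine.

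By contrast, the paper's proof of Proposition~\ref{inv.p3} sidesteps this bookkeeping entirely: having reached $T_H\bigl({\lambda'}_q^{\ast_G}\ast_G\lambda_q^{\ast_G}\bigr)$ it does not invoke the analogue of Proposition~\ref{conv.basic}(1) again, but simply expands this against a test function using Corollary~\ref{nn'2} and recognises the outcome as the defining integral~(\ref{conv.m}) for ${\lambda'}^{\ast^{G/H}}\ast_{G/H}\lambda^{\ast^{G/H}}$. The function-level analogue---computing $T_H\bigl(\psi_q^{\ast_G}\ast_G\varphi_q^{\ast_G}\bigr)(xH)$ directly from~(\ref{5.1}) and matching it to~(\ref{conv.c})---is shorter and avoids the left/right invariance pitfall that caught you.
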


\section{\bf{Abstract Harmonic Analysis on Spaces of Complex Measures on Coset 
Spaces of Compact Subgroups in Locally Compact Groups}}

Throughout this section, we review some of basic results concerning abstract harmonic 
analysis over spaces of complex measures on coset spaces of compact subgroups in locally compact groups, for details and proofs see \cite{AGHF.AM, AGHF.BKMS}. Also, it is still assumed that $G$ is a locally compact group, $H$ is a compact subgroup of $G$, and 
$\mu$ is the normalized $G$-invariant measure over the left coset space $G/H$ associated to (\ref{TH.m}) with respect to the fixed left Haar measure $dx=d\sigma(x)$ of $G$ and the 
probability measure of $H$. 
It should be mentioned that, from now on by a complex measure we mean a regular countably additive complex 
Borel measure. 

For a complex measure $\nu\in M(G)$, let $T_H(\nu)\in M(G/H)$ be the complex measure 
which satisfies  
\begin{equation}\label{TH.M.def}
 \int_{G/H}\psi(xH)dT_H(\nu)(xH)=\int_G\psi_q(x)d\nu(x), 
\end{equation}
for all $\psi\in\mathcal{C}_0(G/H)$. 
Then, $T_H:M(G)\to M(G/H)$ given by $\nu\mapsto T_H(\nu)$ is a surjective linear map. 
It is norm-decreasing as well, that is 
\begin{equation}\label{TH.M.norm}
\|T_H(\nu)\|_{M(G/H)}\le \|\nu\|_{M(G)},
\end{equation}
for all $\nu\in M(G)$, see \cite{50}.

Let $\lambda\in M(G/H)$ be a complex measure. Then, $\Gamma_{\lambda}:\mathcal{C}_0(G)\to\mathbb{C}$
given by 
\[
f\mapsto\Gamma_\lambda(f):=\int_{G/H}T_H(f)(xH)d\lambda(xH),
\]
is a linear functional. Also, it is continuous. Because, we have 
$$|\Gamma_\lambda(f)|
\le\|f\|_{\sup}\|\lambda\|_{M(G/H)}.$$ 
Thus, invoking Riesz-Markov theorem, there exists a unique complex measure, denoted by $\lambda_q\in M(G)$, satisfying 
\begin{equation}
\int_{G}f(x)d\lambda_q(x)=\int_{G/H}T_H(f)(xH)d\lambda(xH),
\end{equation}
for all $f\in\mathcal{C}_0(G)$. 

The following results studied for the case $G$ is compact in \cite{AGHF.BKMS} and extended for the case $G$ is locally comapct and $H$ is compact in \cite{AGHF.AM}.  

\begin{proposition}\label{lambda.q.0}
{\it Let $G$ be a locally compact group and $H$ a compact subgroup of $G$.
Let $\lambda\in M(G/H)$. We then have 
\begin{enumerate}
\item $T_H(\lambda_q)=\lambda$.
\item For each $f\in\mathcal{C}_0(G)$ and $h\in H$ we have 
\[
\int_Gf(xh)d\lambda_q(x)=\int_Gf(x)d\lambda_q(x).
\]
\item Let $T_H^*:M(G/H)\to M(G)$ be the Banach space adjoint of the norm-decreasing 
linear map $T_H:\mathcal{C}_0(G)\to\mathcal{C}_0(G/H)$ given by Theorem \ref{u.c.t}. 
Then 
\begin{equation}
T_H^*(\lambda)=\lambda_q,\ \forall \lambda\in M(G/H).
\end{equation}
\end{enumerate}
}\end{proposition}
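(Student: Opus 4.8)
The plan is to derive all three assertions directly from the defining relation of $\lambda_q$, namely $\int_{G}f(x)\,d\lambda_q(x)=\int_{G/H}T_H(f)(xH)\,d\lambda(xH)$ for $f\in\mathcal{C}_0(G)$, combined with the elementary properties of $T_H$ and the Remark following Theorem \ref{u.c.t}.

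For part (1) I would test the complex measure $T_H(\lambda_q)\in M(G/H)$ against an arbitrary $\psi\in\mathcal{C}_0(G/H)$. By the defining identity (\ref{TH.M.def}) for $T_H$ on measures, $\int_{G/H}\psi(xH)\,dT_H(\lambda_q)(xH)=\int_G\psi_q(x)\,d\lambda_q(x)$. Since $\psi\in\mathcal{C}_0(G/H)$ forces $\psi_q\in\mathcal{C}_0(G)$ with $T_H(\psi_q)=\psi$ (Remark after Theorem \ref{u.c.t}), I may apply the defining relation of $\lambda_q$ with $f=\psi_q$ to get $\int_G\psi_q\,d\lambda_q=\int_{G/H}T_H(\psi_q)\,d\lambda=\int_{G/H}\psi\,d\lambda$. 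Hence $T_H(\lambda_q)$ and $\lambda$ agree as continuous linear functionals on $\mathcal{C}_0(G/H)$, and the uniqueness clause of the Riesz--Markov theorem yields $T_H(\lambda_q)=\lambda$.

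For part (2), fix $h\in H$ and $f\in\mathcal{C}_0(G)$, and set $g(x):=f(xh)$, which again belongs to $\mathcal{C}_0(G)$ since right translation preserves $\mathcal{C}_0(G)$. Applying the defining relation of $\lambda_q$ to $g$ gives $\int_G f(xh)\,d\lambda_q(x)=\int_{G/H}T_H(g)(xH)\,d\lambda(xH)$, so it suffices to check that $T_H(g)=T_H(f)$. For each $x\in G$ we have $T_H(g)(xH)=\int_H f(xkh)\,dk=\int_H f(xk)\,dk=T_H(f)(xH)$, using the right invariance of the probability Haar measure of $H$ (substitute $k\mapsto kh^{-1}$). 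Feeding this back and invoking the defining relation of $\lambda_q$ once more gives $\int_G f(xh)\,d\lambda_q(x)=\int_G f(x)\,d\lambda_q(x)$.

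For part (3), I would identify $\mathcal{C}_0(G)^*$ with $M(G)$ and $\mathcal{C}_0(G/H)^*$ with $M(G/H)$ via Riesz--Markov. By the very definition of the Banach space adjoint, for $\lambda\in M(G/H)$ and $f\in\mathcal{C}_0(G)$ one has $T_H^*(\lambda)(f)=\lambda(T_H(f))=\int_{G/H}T_H(f)(xH)\,d\lambda(xH)$, which is exactly $\int_G f(x)\,d\lambda_q(x)$ by the definition of $\lambda_q$. Thus the functional $T_H^*(\lambda)$ is represented by the measure $\lambda_q$, and uniqueness in Riesz--Markov forces $T_H^*(\lambda)=\lambda_q$ in $M(G)$. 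I do not expect a genuine obstacle in any of these steps; the only point needing care is the legitimacy of evaluating the defining relation of $\lambda_q$ at functions of the form $\psi_q$ and $f(\,\cdot\,h)$, which is supplied respectively by the Remark after Theorem \ref{u.c.t} and by right invariance of Haar measure on $H$.
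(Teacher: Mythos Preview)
Your argument is correct in all three parts: each step follows directly from the defining identity of $\lambda_q$, the identity $T_H(\psi_q)=\psi$ from the Remark after Theorem \ref{u.c.t}, and right invariance of the Haar measure on $H$. Note that the paper itself does not give an in-text proof of this proposition but simply refers the reader to Proposition 4.1 and Theorem 4.2 of \cite{AGHF.AM}; your proposal therefore supplies a self-contained verification that the paper omits, and it is exactly the kind of direct functional-analytic check one would expect the cited source to contain. Part (3) in particular is essentially a restatement of Remark \ref{TH*} once the Riesz--Markov identifications are made, which you handle cleanly.
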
 
\begin{proof}
See Proposition 4.1 and Theorem 4.2 of \cite{AGHF.AM}.
\end{proof}

Let $\mu$ be the normalized $G$-invariant measure over the left coset space $G/H$ and 
$\varphi\in L^1(G/H,\mu)$. Then, one can define the continuous linear functional
\begin{equation}\label{em.fun}
\psi\mapsto \int_{G/H}\psi(xH)\varphi(xH)d\mu(xH),
\end{equation}
for all $\psi\in\mathcal{C}_0(G/H)$. 

Let $\mu_{\varphi}$ be the complex Radon measure on the left coset space 
$G/H$ associated to the continuous linear functional given by (\ref{em.fun}). 
Thus, we get 
\begin{equation}
 \int_{G/H}\psi(xH)d\mu_\varphi(xH)=\int_{G/H}\psi(xH)\varphi(xH)d\mu(xH),
\end{equation}
for all $\psi\in\mathcal{C}_0(G/H)$.

\begin{proposition}
{\it Let $G$ be a locally compact group and $H$ be a compact subgroup of $G$.
Let $\mu$ be the normalized $G$-invariant measure over $G/H$ associated to 
(\ref{TH.m}) with respect to the left Haar measure $\sigma$ of $G$ and the 
probability measure of $H$.
\begin{enumerate}
\item For $\varphi\in L^1(G/H,\mu)$, we have 
\begin{equation}\label{q.em}
(\mu_{\varphi})_q=\sigma_{\varphi_q}.
\end{equation}
\item For $\lambda\in M(G/H)$, we have 
\begin{equation}\label{q.lam}
\|\lambda_q\|_{M(G)}=\|\lambda\|_{M(G/H)}.
\end{equation}
\end{enumerate}
}\end{proposition}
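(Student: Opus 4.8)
The plan is to prove part (1) by the Riesz--Markov theorem: since both $(\mu_\varphi)_q$ and $\sigma_{\varphi_q}$ are complex measures on $G$, it suffices to check that they induce the same continuous linear functional on $\mathcal{C}_0(G)$, and this identity will drop out of a single application of Weil's formula (\ref{TH.m}) to the function $f\cdot\varphi_q$. For part (2) the plan is to combine the relation $\lambda_q=T_H^*(\lambda)$ from Proposition \ref{lambda.q.0}(3) with the contractivity of the two maps $T_H:M(G)\to M(G/H)$ and $T_H:\mathcal{C}_0(G)\to\mathcal{C}_0(G/H)$, which will give matching inequalities in both directions.

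For (1), I would first record that $\varphi_q\in L^1(G)$, so that $\sigma_{\varphi_q}$ is meaningful: the map $\psi\mapsto\psi_q$ is an $L^1$-isometry of $\mathcal{C}_c(G/H)$ into $\mathcal{C}_c(G)\subseteq L^1(G)$ by Weil's formula, hence extends to an isometry of $L^1(G/H,\mu)$ into $L^1(G)$ whose value at $\varphi$ is (the class of) $\varphi_q$; equivalently, one applies the extension of (\ref{TH.m}) to nonnegative Borel functions to the right $H$-invariant function $|\varphi_q|$ and gets $\int_G|\varphi_q|\,dx=\|\varphi\|_{L^1(G/H,\mu)}$. Then I would fix $f\in\mathcal{C}_0(G)$ and set $F:=f\cdot\varphi_q\in L^1(G)$. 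Since $\varphi_q$ is right $H$-invariant, $\int_H F(xh)\,dh=\varphi(xH)\int_H f(xh)\,dh=\varphi(xH)\,T_H(f)(xH)$, so Weil's formula gives
\[
\int_G f(x)\varphi_q(x)\,dx=\int_{G/H}\varphi(xH)\,T_H(f)(xH)\,d\mu(xH).
\]
By the definition of $\sigma_{\varphi_q}$ the left side equals $\int_G f\,d\sigma_{\varphi_q}$, while by the definitions of $\mu_\varphi$ and of $(\,\cdot\,)_q$ the right side equals $\int_{G/H}T_H(f)\,d\mu_\varphi=\int_G f\,d(\mu_\varphi)_q$. As $f\in\mathcal{C}_0(G)$ was arbitrary, Riesz--Markov yields $(\mu_\varphi)_q=\sigma_{\varphi_q}$, which is (\ref{q.em}).

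For (2), Proposition \ref{lambda.q.0}(1) gives $T_H(\lambda_q)=\lambda$, so the norm-decreasing estimate (\ref{TH.M.norm}) for $T_H$ on $M(G)$ yields $\|\lambda\|_{M(G/H)}=\|T_H(\lambda_q)\|_{M(G/H)}\le\|\lambda_q\|_{M(G)}$. In the opposite direction, Proposition \ref{lambda.q.0}(3) identifies $\lambda_q$ with $T_H^*(\lambda)$, the Banach-space adjoint of $T_H:\mathcal{C}_0(G)\to\mathcal{C}_0(G/H)$; since the Riesz--Markov theorem identifies $M(G)$ and $M(G/H)$ isometrically with $\mathcal{C}_0(G)^*$ and $\mathcal{C}_0(G/H)^*$, and $\|T_H^*\|=\|T_H\|\le 1$ by Theorem \ref{u.c.t}, we obtain $\|\lambda_q\|_{M(G)}=\|T_H^*(\lambda)\|\le\|\lambda\|_{M(G/H)}$. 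Combining the two inequalities gives (\ref{q.lam}). Equivalently, $T_H\circ T_H^*=\mathrm{id}_{M(G/H)}$ with both factors contractive forces $T_H^*$ to be an isometry.

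I expect the only genuine technical point to lie in (1): legitimizing Weil's formula for the merely integrable, non-compactly supported integrand $F=f\varphi_q$, together with the preliminary claim $\varphi_q\in L^1(G)$. Both are handled by the standard passage from (\ref{TH.m}) on $\mathcal{C}_c(G)$ to nonnegative Borel functions (monotone convergence) and then to $L^1(G)$ by linearity and density, exactly as in the $L^1$ statement recorded just after (\ref{TH.m}). Granting that, (1) is one line of Weil's formula followed by Riesz--Markov, and (2) is purely formal once Proposition \ref{lambda.q.0}(1),(3) and the contractivity of $T_H$ are available.
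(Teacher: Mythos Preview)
Your proof is correct. The paper does not supply its own argument here but merely cites Propositions~4.3 and~4.4 of \cite{AGHF.AM}, so there is no in-paper proof to compare against; your derivation of (1) via Weil's formula applied to $f\varphi_q$ and of (2) via the pair of contractions $T_H$ and $T_H^*$ together with $T_H(\lambda_q)=\lambda$ is the natural route and almost certainly coincides with what the cited reference does.
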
 
\begin{proof}
See Propositions 4.3 and 4.4 of \cite{AGHF.AM}.
\end{proof}

\begin{theorem}
{\it Let $G$ be a locally compact group and $H$ be a compact subgroup of $G$.
Let $\mu$ be the normalized $G$-invariant measure over $G/H$ associated to (\ref{TH.m}) with respect to 
the left Haar measure $\sigma$ of $G$ and the 
probability measure of $H$. Then, 
 $\varphi\mapsto\mu_\varphi$ defines an isometric linear embedding of the Banach function space 
$L^1(G/H,\mu)$ into the Banach measure space $M(G/H)$.
}\end{theorem}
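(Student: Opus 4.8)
The plan is to deduce everything from the corresponding (already established) facts on $G$, using the two commuting lifts $\varphi\mapsto\varphi_q$ and $\lambda\mapsto\lambda_q$. Linearity of $\varphi\mapsto\mu_\varphi$ is immediate: for $\varphi,\psi\in L^1(G/H,\mu)$ and scalars $a,b$, the continuous functional attached to $a\varphi+b\psi$ by (\ref{em.fun}) is $a$ times that attached to $\varphi$ plus $b$ times that attached to $\psi$, and since the Riesz--Markov assignment (functional)$\mapsto$(measure) is linear we get $\mu_{a\varphi+b\psi}=a\mu_\varphi+b\mu_\psi$. It therefore remains to prove the isometry $\|\mu_\varphi\|_{M(G/H)}=\|\varphi\|_{L^1(G/H,\mu)}$, after which injectivity (hence the embedding property) follows automatically.

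For the isometry I would first record the auxiliary identity $\|\varphi_q\|_{L^1(G)}=\|\varphi\|_{L^1(G/H,\mu)}$ for every $\varphi\in L^1(G/H,\mu)$. This is Weil's formula (\ref{TH.m}) applied to the non-negative function $|\varphi_q|$: since $\varphi_q(xh)=\varphi(xH)$ for $h\in H$ and $\int_H dh=1$, one has $T_H(|\varphi_q|)(xH)=\int_H|\varphi(xH)|\,dh=|\varphi(xH)|$, so $\int_G|\varphi_q(x)|\,dx=\int_{G/H}|\varphi(xH)|\,d\mu(xH)$; one verifies this first for $\varphi\in\mathcal{C}_c(G/H)$ (where it is the ``straightforward'' fact already noted in the paper) and extends by density and monotone convergence. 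Now I chain the cited results: by (\ref{q.lam}) with $\lambda=\mu_\varphi$, $\|\mu_\varphi\|_{M(G/H)}=\|(\mu_\varphi)_q\|_{M(G)}$; by (\ref{q.em}), $(\mu_\varphi)_q=\sigma_{\varphi_q}$; and by the first identity in (\ref{sigma.hom}), $\|\sigma_{\varphi_q}\|_{M(G)}=\|\varphi_q\|_{L^1(G)}$. Altogether,
\[
\|\mu_\varphi\|_{M(G/H)}=\|(\mu_\varphi)_q\|_{M(G)}=\|\sigma_{\varphi_q}\|_{M(G)}=\|\varphi_q\|_{L^1(G)}=\|\varphi\|_{L^1(G/H,\mu)},
\]
which is the asserted isometry; and since $\mu_\varphi=0$ forces $\|\varphi\|_{L^1(G/H,\mu)}=0$, the map is injective, hence an isometric linear embedding.

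The only step that is not purely formal is the auxiliary identity $\|\varphi_q\|_{L^1(G)}=\|\varphi\|_{L^1(G/H,\mu)}$, i.e. that the constant-on-cosets lift is isometric on $L^1$; the delicate point there is legitimising Weil's formula outside $\mathcal{C}_c$ and using compactness of $H$ to see that $q$ carries $\sigma$-null sets to $\mu$-null sets and conversely. A fully self-contained alternative bypasses it: the defining relation $\int_{G/H}\psi\,d\mu_\varphi=\int_{G/H}\psi\varphi\,d\mu$ for all $\psi\in\mathcal{C}_0(G/H)$, together with uniqueness in Riesz--Markov and $\|\cdot\|_{\sup}$-density of $\mathcal{C}_c(G/H)$, identifies $\mu_\varphi$ with the measure $E\mapsto\int_E\varphi\,d\mu$; then $|\mu_\varphi|$ is $E\mapsto\int_E|\varphi|\,d\mu$ and $\|\mu_\varphi\|_{M(G/H)}=|\mu_\varphi|(G/H)=\int_{G/H}|\varphi|\,d\mu=\|\varphi\|_{L^1(G/H,\mu)}$ directly. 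I would use whichever matches the conventions already fixed in the paper, the chain of equalities above being the one most in keeping with its operator-theoretic viewpoint.
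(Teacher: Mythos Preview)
Your argument is correct. The paper does not give an in-text proof of this theorem; it simply defers to Theorem~4.5 of \cite{AGHF.AM}. Your chain
\[
\|\mu_\varphi\|_{M(G/H)}\overset{(\ref{q.lam})}{=}\|(\mu_\varphi)_q\|_{M(G)}\overset{(\ref{q.em})}{=}\|\sigma_{\varphi_q}\|_{M(G)}\overset{(\ref{sigma.hom})}{=}\|\varphi_q\|_{L^1(G)}=\|\varphi\|_{L^1(G/H,\mu)}
\]
uses precisely the identities (\ref{q.em}) and (\ref{q.lam}) that the paper records immediately before the statement (themselves imported from \cite{AGHF.AM}), together with the $L^1$-isometry of the lift $\varphi\mapsto\varphi_q$ via Weil's formula; this is almost certainly the argument the cited reference has in mind, and in any case is the one the paper's setup is designed to support. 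Your alternative direct computation via $|\mu_\varphi|(E)=\int_E|\varphi|\,d\mu$ is equally valid and more self-contained, though less aligned with the paper's programme of reducing statements on $G/H$ to known facts on $G$.
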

\begin{proof}
See Theorem 4.5 of \cite{AGHF.AM}.
\end{proof}

For $\nu\in M(G)$ and $g\in G$, let $\nu_g$ (resp. $\nu^g$) be the left (resp. right)
translation of $\nu$ by $g$, that is $\nu_g(E)=\nu(gE)$ (resp. $\nu^g(E)=\nu(Eg)$) for all Borel subsets $E$ of $G$.
Let $M(G:H)$ and $\mathcal{M}(G:H)$ be the linear subspaces of $M(G)$ given by 
\[
M(G:H):=\{\nu\in M(G):\nu^h=\nu\ \forall h\in H\},
\]
and 
\[
\mathcal{M}(G:H):=\{\nu\in M(G):\nu_h=\nu\ \forall h\in H\}.
\]
Also, let $\mathcal{M}(G/H)$ be the linear subspace of $M(G/H)$ given by 
\[
\mathcal{M}(G/H):=\{\lambda\in M(G/H):\lambda_h=\lambda\ \forall h\in H\},
\]
where $\lambda_x$ is the left translation of $\lambda\in M(G/H)$ 
by $x\in G$.

\begin{remark}
Let $G$ be a locally compact group and $H$ be a compact normal subgroup of $G$.
Then, we get $\mathcal{M}(G:H)=M(G)$ and also $\mathcal{M}(G/H)=M(G/H)$.
\end{remark}

We finish this section by the following straightforward observations.

\begin{proposition}\label{main.m.G}
{\it Let $G$ be a locally compact group and $H$ be a compact subgroup of $G$.
Let $\nu\in M(G)$, $\lambda,\lambda'\in M(G/H)$, and $x\in G$. 
We then have 
\begin{enumerate}
\item $T_H(\nu_x)=T_H(\nu)_x$.
\item $(\lambda_x)_q=(\lambda_q)_x$.
\end{enumerate}
}\end{proposition}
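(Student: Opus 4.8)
The plan is to verify both identities by testing against arbitrary functions in $\mathcal{C}_0(G/H)$ and $\mathcal{C}_0(G)$ respectively, and using the defining properties of $T_H$ on measures (equation (\ref{TH.M.def})) together with the translation-invariance of the Haar measure on $G$.

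For part (1), fix $x\in G$ and $\psi\in\mathcal{C}_0(G/H)$. I would compute
\[
\int_{G/H}\psi(yH)\,dT_H(\nu_x)(yH)=\int_G\psi_q(y)\,d\nu_x(y)=\int_G\psi_q(xy)\,d\nu(y),
\]
where the first equality is (\ref{TH.M.def}) applied to $\nu_x$, and the second is the change of variables $y\mapsto xy$ under the definition $\nu_x(E)=\nu(xE)$. On the other hand, one has $\psi_q(xy)=\psi(xyH)=(L_{x^{-1}}\psi)_q(y)$, so the right-hand side equals $\int_G(L_{x^{-1}}\psi)_q(y)\,d\nu(y)=\int_{G/H}L_{x^{-1}}\psi(yH)\,dT_H(\nu)(yH)$, again by (\ref{TH.M.def}). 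A further change of variables $yH\mapsto xyH$ in the measure $T_H(\nu)$ (i.e. the definition of the translate $T_H(\nu)_x$) shows this is $\int_{G/H}\psi(yH)\,dT_H(\nu)_x(yH)$. Since $\psi\in\mathcal{C}_0(G/H)$ was arbitrary and complex measures are determined by their action on $\mathcal{C}_0(G/H)$ (Riesz--Markov), we conclude $T_H(\nu_x)=T_H(\nu)_x$.

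For part (2), fix $x\in G$, $\lambda\in M(G/H)$, and $f\in\mathcal{C}_0(G)$. Using the defining relation for $\lambda_q$, namely $\int_G f\,d\lambda_q=\int_{G/H}T_H(f)\,d\lambda$, I would write
\[
\int_G f(y)\,d(\lambda_x)_q(y)=\int_{G/H}T_H(f)(yH)\,d\lambda_x(yH)=\int_{G/H}T_H(f)(xyH)\,d\lambda(yH).
\]
The key computation is that $T_H(f)(xyH)=\int_H f(xyh)\,dh=T_H(L_{x^{-1}}f)(yH)$, where $L_{x^{-1}}f(z)=f(xz)$; note $L_{x^{-1}}f\in\mathcal{C}_0(G)$. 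Hence the expression equals $\int_{G/H}T_H(L_{x^{-1}}f)(yH)\,d\lambda(yH)=\int_G L_{x^{-1}}f(y)\,d\lambda_q(y)=\int_G f(xy)\,d\lambda_q(y)=\int_G f(y)\,d(\lambda_q)_x(y)$, the last step being the change of variables defining the translate $(\lambda_q)_x$. As $f$ ranges over $\mathcal{C}_0(G)$, uniqueness in Riesz--Markov gives $(\lambda_x)_q=(\lambda_q)_x$.

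I do not expect a serious obstacle here; this is a ``straightforward observation'' as the authors say. The only point requiring a little care is keeping the direction of the translation consistent — whether $\nu_x(E)=\nu(xE)$ induces a push-forward by $x^{-1}$ or a change of variables by $x$ in integrals — and matching the left-action convention $L_z\varphi(yH)=\varphi(z^{-1}yH)$ used for functions. Once the bookkeeping of these conventions is pinned down at the start, both identities fall out immediately from (\ref{TH.M.def}), the definition of $\lambda_q$, and the fact that $T_H$ intertwines left translation on $G$ with left translation on $G/H$ at the level of functions.
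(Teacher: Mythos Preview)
Your approach is correct and is exactly the natural one: test both sides against $\mathcal{C}_0$-functions and invoke Riesz--Markov. The paper does not supply a proof; it labels the proposition a ``straightforward observation'' and states it without argument, so there is nothing substantive to compare against.

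One bookkeeping remark. Under the paper's convention $\nu_x(E)=\nu(xE)$ one has
\[
\int_G f(y)\,d\nu_x(y)=\int_G f(x^{-1}y)\,d\nu(y),
\]
not $\int_G f(xy)\,d\nu(y)$ (take $\nu=\delta_a$: then $\nu_x(E)=1$ iff $a\in xE$ iff $x^{-1}a\in E$, so $\nu_x=\delta_{x^{-1}a}$). You use the reversed formula, but you do so consistently on both the $G$ side and the $G/H$ side, so the two slips cancel and the argument still establishes the stated identity; equivalently, your lines prove the claim with $x$ replaced by $x^{-1}$, which is the same statement since $x\in G$ is arbitrary. You already flagged this as the one point requiring care. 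With the corrected direction the intermediate identities read $\psi_q(x^{-1}y)=(L_x\psi)_q(y)$ in part~(1) and $T_H(f)(x^{-1}yH)=T_H(L_xf)(yH)$ in part~(2), and the rest of your argument goes through verbatim.
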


\begin{proposition}\label{main.m.G1}
{\it Let $G$ be a locally compact group and $H$ be a compact subgroup of $G$.
We then have 
\begin{enumerate}
\item $M(G:H)=\{\lambda_q:\lambda\in M(G/H)\}$.
\item $\mathcal{M}(G/H)=\{\lambda\in M(G/H):\lambda\circ J_0=\lambda\}$.
\item $T_H$ maps $M(G:H)$ onto $M(G/H)$.
\item $T_H$ maps $\mathcal{M}(G:H)$ onto $\mathcal{M}(G/H)$.
\end{enumerate}
}\end{proposition}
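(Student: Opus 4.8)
The four statements all reduce to the two structural facts already at our disposal: the defining relation $\int_G f\,d\lambda_q=\int_{G/H}T_H(f)\,d\lambda$ of the measure $\lambda_q$, together with $T_H(\lambda_q)=\lambda$ and the invariance $\int_G f(xh)\,d\lambda_q(x)=\int_G f(x)\,d\lambda_q(x)$ from Proposition~\ref{lambda.q.0}, and the intertwiners $T_H(\nu_x)=T_H(\nu)_x$, $(\lambda_x)_q=(\lambda_q)_x$ from Proposition~\ref{main.m.G}.

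For (1), the inclusion $\{\lambda_q:\lambda\in M(G/H)\}\subseteq M(G:H)$ is nothing but Proposition~\ref{lambda.q.0}(2) once the definition of the right translate of a measure is unwound, since $(\lambda_q)^h=\lambda_q$ is equivalent to $\int_G f(xh)\,d\lambda_q(x)=\int_G f(x)\,d\lambda_q(x)$ for all $f\in\mathcal{C}_0(G)$ and $h\in H$. For the reverse inclusion, given $\nu\in M(G:H)$ I would set $\lambda:=T_H(\nu)\in M(G/H)$ and verify $\nu=\lambda_q$ by pairing with an arbitrary $f\in\mathcal{C}_0(G)$: the definitions of $\lambda_q$, of $T_H$ on measures, and of $T_H$ on $\mathcal{C}_c$ give $\int_G f\,d\lambda_q=\int_{G/H}T_H(f)\,dT_H(\nu)=\int_G(T_Hf)_q\,d\nu=\int_G\Big(\int_H f(xh)\,dh\Big)\,d\nu(x)$, and a Fubini interchange together with right $H$-invariance of $\nu$ collapses this to $\int_G f\,d\nu$; uniqueness in the Riesz--Markov theorem forces $\nu=\lambda_q$. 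Statement (3) is then immediate: by (1) every $\nu\in M(G:H)$ is of the form $\lambda_q$, and $T_H(\lambda_q)=\lambda$ by Proposition~\ref{lambda.q.0}(1). For (4): if $\nu\in\mathcal{M}(G:H)$ then $T_H(\nu)_h=T_H(\nu_h)=T_H(\nu)$ for all $h\in H$ by Proposition~\ref{main.m.G}(1), so $T_H(\nu)\in\mathcal{M}(G/H)$; conversely, for $\lambda\in\mathcal{M}(G/H)$ one has $(\lambda_q)_h=(\lambda_h)_q=\lambda_q$ by Proposition~\ref{main.m.G}(2), hence $\lambda_q\in\mathcal{M}(G:H)$ and $T_H(\lambda_q)=\lambda$, which gives surjectivity.

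Statement (2) is the substantive one, and I would treat it through the identity that $J$ absorbs left $H$-translations: for $h_0\in H$ and $\psi\in\mathcal{C}_c(G/H)$, substituting $h\mapsto h_0h$ under the left-invariant Haar measure of $H$ in $J(L_{h_0}\psi)(xH)=\int_H\psi(h_0^{-1}hxH)\,dh$ yields $J(L_{h_0}\psi)=J\psi$ (the same substitution shows $J$, and hence $J_0$, is idempotent). To prove ``$\lambda\in\mathcal{M}(G/H)\Rightarrow\lambda\circ J_0=\lambda$'', I would fix $\psi\in\mathcal{C}_c(G/H)$, apply Fubini to write $\int_{G/H}J\psi\,d\lambda=\int_H\big(\int_{G/H}\psi(hxH)\,d\lambda(xH)\big)\,dh$, recognise the inner integral as $\int_{G/H}\psi\,d\lambda_{h^{-1}}=\int_{G/H}\psi\,d\lambda$, and then pass from $\mathcal{C}_c(G/H)$ to $\mathcal{C}_0(G/H)$ using density and the fact (Corollary~\ref{J0}) that $J_0$ is the continuous extension of $J$. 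For the converse, assume $\lambda\circ J_0=\lambda$, fix $h_0\in H$, and apply the hypothesis to the test function $L_{h_0}\psi$ with $\psi\in\mathcal{C}_c(G/H)$: since $J_0(L_{h_0}\psi)=J(L_{h_0}\psi)=J\psi=J_0\psi$, one obtains $\int_{G/H}\psi\,d\lambda_{h_0}=\int_{G/H}L_{h_0}\psi\,d\lambda=\int_{G/H}J_0(L_{h_0}\psi)\,d\lambda=\int_{G/H}J_0\psi\,d\lambda=\int_{G/H}\psi\,d\lambda$; density then gives $\lambda_{h_0}=\lambda$, so $\lambda\in\mathcal{M}(G/H)$.

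The main obstacle is precisely this converse inclusion in (2). A priori $\lambda\circ J_0=\lambda$ says only that $\lambda$ coincides with the $H$-average $\int_H\lambda_h\,dh$ of its own left translates, and in $(M(G/H),\|\cdot\|_{M(G/H)})$, which is far from strictly convex, such an averaged identity need not by itself force each individual $\lambda_h$ to equal $\lambda$. The identity $J(L_{h_0}\psi)=J\psi$ is the device that circumvents this, by allowing a single translated test function to be fed back into the hypothesis. Everything else is routine: the dictionary between the translates $\nu^h,\nu_h,\lambda_h$ and the operators $R_h,L_h$ on functions, the Fubini interchanges (legitimate since $H$ is compact and the measures are finite), and the $\mathcal{C}_c$-to-$\mathcal{C}_0$ density and continuity passages.
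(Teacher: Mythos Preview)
The paper does not actually supply a proof of this proposition: it is introduced as one of the ``straightforward observations'' closing Section~4 and is stated without argument. Your proposal is correct and is exactly the argument the surrounding material is set up to support. Parts (1), (3), (4) are indeed routine consequences of Proposition~\ref{lambda.q.0}, Proposition~\ref{main.m.G}, and the defining relation~(\ref{TH.M.def}); your Fubini/right-invariance verification that $\nu\in M(G:H)$ implies $\nu=(T_H\nu)_q$ is the right way to obtain the nontrivial inclusion in (1).

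For part~(2) you have also identified the genuine content: the absorption identity $J(L_{h_0}\psi)=J\psi$, obtained from left invariance of Haar measure on $H$, is precisely what upgrades the averaged hypothesis $\lambda\circ J_0=\lambda$ to the pointwise conclusion $\lambda_{h_0}=\lambda$. Your handling of the translation conventions ($\int\psi\,d\lambda_{h}=\int L_h\psi\,d\lambda$) and of the $\mathcal{C}_c$-to-$\mathcal{C}_0$ passage via Corollary~\ref{J0} is accurate. There is nothing to compare against, but nothing is missing either.
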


\section{\bf{Abstract Structure of Measure Algebras on Coset Spaces of Compact Subgroups in Locally Compact Groups}}

In this section, we study the abstract structure of measure algebras over left coset spaces of compact subgroups in locally compact groups. In this direction, we introduce the abstract notation of involution for complex measures on coset spaces of compact subgroups in locally compact groups. We then study some analytic aspects of these involutions on measure algebras. In details, this section presents canonical extensions for our results concerning 
the abstract notions of involution associated to the Banach convolution measure spaces on coset spaces of compact groups \cite{AGHF.IntJM} into a more general settings, that is the case of coset spaces of compact subgroups in locally compact groups. 

For complex Radon measures $\lambda,\lambda'\in M(G/H)$, define 
the linear functional $\Lambda_{\lambda,\lambda'}:\mathcal{C}_0(G/H)\to\mathbb{C}$ by 
\[
\psi\mapsto\Lambda_{\lambda,\lambda'}(\psi):=\int_{G/H}\int_{G/H}\left(\int_H\psi(xhyH)dh\right)d\lambda(xH)d\lambda'(yH),
\]
for all $\psi\in\mathcal{C}_0(G/H)$.

Using compactness of $H$ we can write 
\begin{align*}
|\Lambda_{\lambda,\lambda'}(\psi)|
&=\left|\int_{G/H}\int_{G/H}\left(\int_H\psi(xhyH)dh\right)d\lambda(xH)d\lambda'(yH)\right|
\\&\le\int_{G/H}\int_{G/H}\left|\int_H\psi(xhyH)dh\right|d|\lambda|(xH)d|\lambda'|(yH)
\\&\le\int_{G/H}\int_{G/H}\int_H\left|\psi(xhyH)\right|dhd|\lambda|(xH)d|\lambda'|(yH)
\le \|\psi\|_{\sup}\|\lambda\|\|\lambda'\|.
\end{align*}

Thus, by Riesz-Markov theorem, 
there exists a complex Radon measure, denoted by 
$\lambda\ast_{G/H}\lambda'\in M(G/H)$, satisfying 
\begin{equation}\label{conv.m}
\lambda\ast_{G/H}\lambda'(\psi)=\int_{G/H}\int_{G/H}\left(\int_H\psi(xhyH)dh\right)d\lambda(xH)d\lambda'(yH),
\end{equation}
for all $\psi\in\mathcal{C}_0(G/H)$, where 
\[
\lambda\ast_{G/H}\lambda'(\psi)=\int_{G/H}\psi(gH)d\lambda\ast_{G/H}\lambda'(gH).
\]
Then, the mapping 
$$\ast_{G/H}:M(G/H)\times M(G/H)\to M(G/H),$$ 
defined by 
$$(\lambda,\lambda')\mapsto\lambda\ast_{G/H}\lambda',$$ 
is a bilinear product. 

The Banach measure space $M(G/H)$ is a Banach algebra with respect to the convolution given by (\ref{conv.m}). Also, $\varphi\mapsto\mu_\varphi$ defines an isometric homomorphism embedding of the Banach function algebra  $L^1(G/H,\mu)$ into the Banach measure algebra $M(G/H)$.

\begin{remark}
Let $G$ be a locally compact group and $H$ be a compact normal subgroup of $G$.

(i) For $\psi\in\mathcal{C}_0(G/H)$ and $x,y\in G$ we can write 
\begin{align*}
\int_H\psi(xhyH)dh
&=\int_H\psi(xyy^{-1}hyH)dh
\\&=\int_H\psi(xyH)dh=\psi(xyH).
\end{align*}
(ii) For $\lambda,\lambda'\in M(G/H)$ and $\psi\in\mathcal{C}_0(G/H)$ we get 
\begin{align*}
\lambda\ast_{G/H}\lambda'(\psi)
&=\int_{G/H}\int_{G/H}\left(\int_H\psi(xhyH)dh\right)d\lambda(xH)d\lambda'(yH)
\\&=\int_{G/H}\int_{G/H}\psi(xyH)d\lambda(xH)d\lambda'(yH)
\\&=\int_{G/H}\int_{G/H}\psi(xHyH)d\lambda(xH)d\lambda'(yH).
\end{align*}
Thus, invoking (\ref{m.conv.G}), we deduce that the convolution defined by (\ref{conv.m}) 
coincides with the canonical convolution of complex measures 
over the locally compact quotient group $G/H$, if $H$ is normal in $G$.
\end{remark}

\begin{remark}
The convolution given by (\ref{conv.m}) introduced in Equation (4.2) of \cite{Jav.Tav} and  studied with different notions in some directions.  
\end{remark}
\begin{remark}
The convolution given by (\ref{conv.m}) introduced in 
Theorem 2.9 of \cite{Derk} and studied with different notions in some directions.
\end{remark}

The following observations are straightforward. 

\begin{proposition}\label{conv.m.q}
{\it Let $G$ be a locally compact group and $H$ be a compact subgroup of $G$.
Let $\lambda,\lambda'\in M(G/H)$. Then we have 
\begin{enumerate}
\item $(\lambda\ast_{G/H}\lambda')_q=\lambda_q\ast_G\lambda'_q$.
\item $(\lambda\ast_{G/H}\lambda')_x=\lambda_x\ast_{G/H}\lambda'$.
\end{enumerate}
}\end{proposition}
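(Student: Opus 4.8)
The plan is to verify both identities by testing against functions $f\in\mathcal{C}_0(G)$ (for part (1)) or $\psi\in\mathcal{C}_0(G/H)$ (for part (2)), using the defining relations for the measures involved together with Fubini's theorem, whose application is justified by the finiteness of total variations and the compactness of $H$.

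For part (1), I would start from the definition of $\lambda_q$ in terms of $T_H$, so that for $f\in\mathcal{C}_0(G)$ we have $\int_G f\,d(\lambda\ast_{G/H}\lambda')_q=\int_{G/H}T_H(f)(xH)\,d(\lambda\ast_{G/H}\lambda')(xH)$. Next I would apply the defining formula (\ref{conv.m}) with $\psi=T_H(f)$, which expresses this as a triple integral $\int_{G/H}\int_{G/H}\int_H T_H(f)(xhyH)\,dh\,d\lambda(xH)\,d\lambda'(yH)$. The key computation is to unfold $T_H(f)(xhyH)=\int_H f(xhyh')\,dh'$ and use left invariance of the Haar measure $dh$ on $H$ (a translation $h\mapsto h$ inside the group composed with $y$ is handled by collapsing $\int_H\int_H f(xhyh')\,dh\,dh'$ back against the structure of $\lambda_q,\lambda'_q$). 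Concretely, by Proposition \ref{lambda.q.0}(2), $\lambda'_q$ is right-$H$-invariant, and $\lambda_q$ satisfies $\int_G f(xh)\,d\lambda_q(x)=\int_G f(x)\,d\lambda_q(x)$; combining these with the definition (\ref{m.conv.G}) of $\ast_G$ and the relation $\int_G g\,d\lambda_q=\int_{G/H}T_H(g)\,d\lambda$ should collapse the triple integral to $\int_G\int_G f(xy)\,d\lambda_q(x)\,d\lambda'_q(y)=\lambda_q\ast_G\lambda'_q(f)$. Since both sides are regular complex Borel measures on $G$ agreeing on $\mathcal{C}_0(G)$, uniqueness in the Riesz–Markov theorem gives the equality.

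For part (2), I would test against $\psi\in\mathcal{C}_0(G/H)$. By definition of left translation of a measure on $G/H$, $(\lambda\ast_{G/H}\lambda')_x(\psi)=\int_{G/H}\psi(xgH)\,d(\lambda\ast_{G/H}\lambda')(gH)$, i.e. $=\lambda\ast_{G/H}\lambda'(L_{x^{-1}}\psi)$, and then apply (\ref{conv.m}) to $L_{x^{-1}}\psi$ to obtain $\int_{G/H}\int_{G/H}\int_H \psi(x\,whz\,H)\,dh\,d\lambda(wH)\,d\lambda'(zH)$. On the other hand, $\lambda_x\ast_{G/H}\lambda'(\psi)=\int_{G/H}\int_{G/H}\int_H\psi(whzH)\,dh\,d\lambda_x(wH)\,d\lambda'(zH)$, and using $\int_{G/H}F(wH)\,d\lambda_x(wH)=\int_{G/H}F(xwH)\,d\lambda(wH)$ converts this into the same triple integral. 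So the two measures agree on $\mathcal{C}_0(G/H)$ and Riesz–Markov finishes it. Alternatively, part (2) also follows from part (1) combined with Proposition \ref{main.m.G}(2) (which gives $(\lambda_x)_q=(\lambda_q)_x$), the translation-covariance $(\nu_x\ast_G\nu')=(\nu\ast_G\nu')_x$ on $G$, and the injectivity of $\lambda\mapsto\lambda_q$ (Proposition \ref{lambda.q.0}(1)); I would present whichever is shorter, probably the direct change-of-variables argument.

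The only genuine obstacle is the bookkeeping in part (1): one must be careful that after inserting $T_H(f)(xhyH)=\int_H f(xhyh')\,dh'$ the inner $dh$-integral is genuinely absorbed by the $H$-invariance properties of $\lambda_q$ and $\lambda'_q$ rather than producing a spurious extra averaging. The resolution is to peel off the integrals one at a time — first use right-$H$-invariance of $\lambda'_q$ (Proposition \ref{lambda.q.0}(2)) to kill the $h'$ (or $h$) variable, then recognize $\int_G (\,\cdot\,)\,d\lambda_q$ as $\int_{G/H}T_H(\,\cdot\,)\,d\lambda$ — and all the Fubini interchanges are legitimate because $|\lambda|\otimes|\lambda'|\otimes dh$ is a finite measure and the integrand is bounded continuous. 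No delicate analysis is needed beyond this; everything else is routine.
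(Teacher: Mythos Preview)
The paper omits the proof entirely, declaring the observations ``straightforward,'' so there is no argument in the paper to compare against; your plan of testing both sides against $\mathcal{C}_0$-functions and invoking Riesz--Markov uniqueness is exactly the natural one and is correct.

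Two minor points to clean up. In part~(1), the ``obstacle'' you flag is not really there: once you expand both sides, each becomes the \emph{same} quadruple integral
\[
\int_{G/H}\int_{G/H}\int_H\int_H f(xhyh')\,dh'\,dh\,d\lambda(xH)\,d\lambda'(yH),
\]
simply by applying the defining relation $\int_G g\,d\lambda_q=\int_{G/H}T_H(g)\,d\lambda$ once for $\lambda_q$ and once for $\lambda'_q$ on the $\lambda_q\ast_G\lambda'_q$ side; no invariance trick or ``collapsing'' is needed, and Proposition~\ref{lambda.q.0}(2) plays no role. In part~(2), with the paper's convention $\mu_x(E)=\mu(xE)$ one has $\nu_x(\psi)=\nu(L_x\psi)=\int\psi(x^{-1}g)\,d\nu(g)$, so your displayed formula should read $\psi(x^{-1}gH)$ rather than $\psi(xgH)$; the rest of the change-of-variables computation is unaffected, and your alternative route via part~(1), Proposition~\ref{main.m.G}(2), and injectivity of $\lambda\mapsto\lambda_q$ also works.
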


\begin{corollary}\label{nn1}
{\it Let $G$ be a locally compact group and $H$ be a compact subgroup of $G$. 
Let $\nu\in M(G)$ and $\lambda,\lambda'\in M(G/H)$. Then
\begin{enumerate}
\item $T_H(\lambda_q\ast_G\nu)=\lambda\ast_{G/H}T_H(\nu)$.
\item $T_H(\lambda_q\ast_G\lambda_q')=\lambda\ast_{G/H}\lambda'$.
\end{enumerate}
}\end{corollary}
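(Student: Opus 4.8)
The plan is to establish (1) by testing both sides against an arbitrary $\psi\in\mathcal{C}_0(G/H)$ and then unwinding the definitions of the two group convolutions, the coset-space convolution (\ref{conv.m}), and the measure $\lambda_q$; part (2) then falls out at once by taking $\nu=\lambda_q'$ in (1) and using $T_H(\lambda_q')=\lambda'$ from Proposition \ref{lambda.q.0}(1). (Alternatively (2) follows by applying $T_H$ to the identity $(\lambda\ast_{G/H}\lambda')_q=\lambda_q\ast_G\lambda_q'$ of Proposition \ref{conv.m.q}(1) and invoking Proposition \ref{lambda.q.0}(1) for the measure $\lambda\ast_{G/H}\lambda'\in M(G/H)$.)

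For (1), fix $\psi\in\mathcal{C}_0(G/H)$; then $\psi_q\in\mathcal{C}_0(G)$ by the remark following Theorem \ref{u.c.t}. Using (\ref{TH.M.def}) and then (\ref{m.conv.G}),
\[
T_H(\lambda_q\ast_G\nu)(\psi)=(\lambda_q\ast_G\nu)(\psi_q)=\int_G\int_G\psi_q(xy)\,d\lambda_q(x)\,d\nu(y).
\]
For each fixed $y\in G$ the function $x\mapsto\psi_q(xy)$ again lies in $\mathcal{C}_0(G)$, so the defining relation of $\lambda_q$ together with (\ref{5.1}) gives
\[
\int_G\psi_q(xy)\,d\lambda_q(x)=\int_{G/H}\Big(\int_H\psi_q(xhy)\,dh\Big)\,d\lambda(xH)=\int_{G/H}\Big(\int_H\psi(xhyH)\,dh\Big)\,d\lambda(xH).
\]
Substituting this back, $T_H(\lambda_q\ast_G\nu)(\psi)=\int_G\Phi(y)\,d\nu(y)$, where $\Phi(y):=\int_{G/H}\big(\int_H\psi(xhyH)\,dh\big)\,d\lambda(xH)$.

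The key remaining step is to check that $\Phi=\psi'_q$ for a suitable $\psi'\in\mathcal{C}_0(G/H)$. Right $H$-invariance of $\Phi$ is immediate, since $\psi_q$ is right $H$-invariant; that the function $\psi'$ it induces on $G/H$ lies in $\mathcal{C}_0(G/H)$ is a routine continuity-plus-vanishing-at-infinity argument, using that $y\mapsto(x\mapsto\psi_q(xy))$ is norm-continuous into $\mathcal{C}_0(G)$ together with boundedness of $\lambda$ and continuity of $T_H$ for continuity, and properness of $q$ (valid because $H$ is compact), after discarding the mass of $|\lambda|$ outside a large compact set, for vanishing at infinity. Granting this, (\ref{TH.M.def}) applied to $\psi'$ yields
\[
\int_G\Phi(y)\,d\nu(y)=\int_{G/H}\psi'(yH)\,dT_H(\nu)(yH)=\int_{G/H}\int_{G/H}\Big(\int_H\psi(xhyH)\,dh\Big)\,d\lambda(xH)\,dT_H(\nu)(yH),
\]
which is exactly $\lambda\ast_{G/H}T_H(\nu)(\psi)$ by (\ref{conv.m}). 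As $\psi\in\mathcal{C}_0(G/H)$ was arbitrary, the uniqueness clause of the Riesz-Markov theorem forces $T_H(\lambda_q\ast_G\nu)=\lambda\ast_{G/H}T_H(\nu)$.

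The whole argument is a chain of substitutions into the defining functionals, and --- keeping the iterated integrals in the order in which they arise --- no interchange of the order of integration is needed. The one place where I expect to spend real effort is the verification that the $H$-averaged function $\Phi$ descends to a member of $\mathcal{C}_0(G/H)$; this is where compactness of $H$, hence properness of $q:G\to G/H$, and the (right) uniform continuity of $\psi_q$ genuinely enter, and it is entirely analogous to the checks already needed to define $\lambda_q$ and the convolution (\ref{conv.m}) in the first place.
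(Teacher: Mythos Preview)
Your proof is correct. The paper itself states this result as a corollary (to Proposition \ref{conv.m.q}) without any proof, so there is no ``paper proof'' to compare against in detail; the intended route is almost certainly the one you mention as your alternative for (2): apply $T_H$ to the identity $(\lambda\ast_{G/H}\lambda')_q=\lambda_q\ast_G\lambda'_q$ of Proposition \ref{conv.m.q}(1) and use $T_H(\cdot_q)=\mathrm{id}$ from Proposition \ref{lambda.q.0}(1). Part (1) then follows from (2) once one checks that $T_H(\lambda_q\ast_G\nu)$ depends on $\nu$ only through $T_H(\nu)$, which is a one-line computation (for $\psi\in\mathcal{C}_0(G/H)$ and fixed $x$, the function $y\mapsto\psi_q(xy)=(L_{x^{-1}}\psi)_q(y)$ is again of the form $(\cdot)_q$).

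Your direct approach---testing against $\psi$ and unwinding both sides---is equally valid and in fact makes the mechanism more transparent. The one place you flag as requiring work, namely that $\Phi(y)=\int_{G/H}\big(\int_H\psi(xhyH)\,dh\big)\,d\lambda(xH)$ descends to an element of $\mathcal{C}_0(G/H)$, is correctly sketched: right $H$-invariance is immediate, continuity follows from the norm-continuity of right translation on $\mathcal{C}_0(G)$ together with boundedness of $\lambda_q$, and vanishing at infinity follows exactly as you outline using inner regularity of $|\lambda|$ and properness of $q$. Your remark that no Fubini swap is needed is also accurate with the convention the paper uses for the iterated integrals in (\ref{m.conv.G}) and (\ref{conv.m}).
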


\begin{corollary}\label{nn'2}
{\it Let $G$ be a locally compact group and $H$ be a compact subgroup of $G$. 
Let $\nu,\nu'\in M(G)$ with $\nu'\in\mathcal{M}(G:H)$ and $\psi\in\mathcal{C}_0(G/H)$. 
Then
\begin{equation*}
\int_{G/H}\psi(xH)dT_H(\nu\ast_G\nu')(xH)
=\int_G\int_G\left(\int_H\psi(xhyH)dh\right)d\nu(x)d\nu'(y),
\end{equation*}
and
\begin{equation*}
\int_{G/H}\psi(xH)dT_H(\nu\ast_G\nu')(xH)
=\int_G\int_{G/H}\left(\int_H\psi(xhyH)dh\right)dT_H(\nu')(yH)d\nu(x).
\end{equation*}
}\end{corollary}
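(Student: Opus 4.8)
The plan is to transport both identities to the group $G$ by means of the defining relation $(\ref{TH.M.def})$ for $T_H$ on measures and the definition $(\ref{m.conv.G})$ of the convolution $\ast_G$, and then to exploit the left $H$-invariance of $\nu'$ in order to insert the average over $H$. Since $\psi\in\mathcal{C}_0(G/H)$, we have $\psi_q\in\mathcal{C}_0(G)$ by the remark following Theorem $\ref{u.c.t}$, so $(\ref{TH.M.def})$ followed by $(\ref{m.conv.G})$ gives
\[
\int_{G/H}\psi(xH)\,dT_H(\nu\ast_G\nu')(xH)=\int_G\psi_q(x)\,d(\nu\ast_G\nu')(x)=\int_G\int_G\psi_q(xy)\,d\nu(x)\,d\nu'(y).
\]

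The key elementary observation for the first formula is that $\nu'\in\mathcal{M}(G:H)$, i.e.\ $\nu'_h=\nu'$ for all $h\in H$, is equivalent to $\int_Gf(y)\,d\nu'(y)=\int_Gf(hy)\,d\nu'(y)$ for every $f\in\mathcal{C}_0(G)$ and $h\in H$ (one checks this first for indicator functions of Borel sets and then extends by density). Applying it to the left translate $y\mapsto\psi_q(xy)$, which again lies in $\mathcal{C}_0(G)$, yields $\int_G\psi_q(xy)\,d\nu'(y)=\int_G\psi_q(xhy)\,d\nu'(y)$ for all $x\in G$ and $h\in H$. Averaging over $h$ against the probability Haar measure of $H$ and using Fubini's theorem for the finite measure $\nu'$ and the compact group $H$ (the integrand $(h,y)\mapsto\psi_q(xhy)$ being bounded and continuous), we obtain $\int_G\psi_q(xy)\,d\nu'(y)=\int_G\big(\int_H\psi_q(xhy)\,dh\big)\,d\nu'(y)$. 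Substituting this into the display above and rewriting $\psi_q(xhy)=\psi(xhyH)$ produces the first formula, where the interchanges of integration are justified by Fubini for the finite complex measures $\nu,\nu'$ and the compact $H$.

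For the second formula I would start from the right-hand side of the first one and, by Fubini, rewrite it as $\int_G\big[\int_G\big(\int_H\psi(xhyH)\,dh\big)\,d\nu'(y)\big]\,d\nu(x)$. For each fixed $x\in G$ the function $F_x:G\to\mathbb{C}$, $F_x(y):=\int_H\psi(xhyH)\,dh$, is right $H$-invariant since $yh'H=yH$ for $h'\in H$, so it descends to a function $\widetilde F_x$ on $G/H$ with $(\widetilde F_x)_q=F_x$; moreover $\widetilde F_x\in\mathcal{C}_0(G/H)$, continuity coming from compactness of $H$ and vanishing at infinity from the fact that, for compact $K\subset G/H$, the set $\bigcup_{h\in H}(xh)^{-1}\cdot K$ of cosets $yH$ with $xh\cdot yH\in K$ for some $h\in H$ is a continuous image of the compact set $H\times K$, hence compact. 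Applying $(\ref{TH.M.def})$ with $\widetilde F_x$ in place of $\psi$ gives $\int_GF_x(y)\,d\nu'(y)=\int_{G/H}\widetilde F_x(yH)\,dT_H(\nu')(yH)$, and substituting this into the preceding display yields the second formula.

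The only genuinely technical steps are the verification that $\widetilde F_x$ belongs to $\mathcal{C}_0(G/H)$ — which is precisely what makes $(\ref{TH.M.def})$ applicable in the last step — and the repeated appeals to Fubini's theorem for the finite complex measures $\nu,\nu'$ together with the probability Haar measure of the compact group $H$, applied to the bounded continuous integrand $(x,h,y)\mapsto\psi_q(xhy)$; both are routine, and everything else is a direct unwinding of the definitions combined with the left $H$-invariance of $\nu'$.
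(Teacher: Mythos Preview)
Your argument is correct. The paper does not supply a proof of this corollary; it is listed among results described as ``straightforward observations'' following Proposition~\ref{conv.m.q} and Corollary~\ref{nn1}. Your approach---pulling the left-hand side back to $G$ via $(\ref{TH.M.def})$ and $(\ref{m.conv.G})$, then inserting the $H$-average using the left $H$-invariance of $\nu'$, and finally recognizing that $y\mapsto\int_H\psi(xhyH)\,dh$ descends to a function in $\mathcal{C}_0(G/H)$ so that $(\ref{TH.M.def})$ can be applied once more with $\nu'$---is exactly the natural way to unpack what the paper leaves implicit, and all the justifications you give (Fubini for finite complex measures and compact $H$, the compactness argument for vanishing at infinity of $\widetilde F_x$) are sound.

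One small remark: your parenthetical ``one checks this first for indicator functions of Borel sets and then extends by density'' is slightly misstated, since indicator functions are not in $\mathcal{C}_0(G)$; the cleaner route is simply the change-of-variables identity $\int_G f\,d\nu'_h=\int_G f(h^{-1}\cdot)\,d\nu'$ for $f\in\mathcal{C}_0(G)$, which follows from the Riesz--Markov identification and the definition $\nu'_h(E)=\nu'(hE)$. This does not affect the validity of the argument.
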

 
\begin{proposition}
{\it Let $G$ be a locally compact group and $H$ be a compact subgroup of $G$. 
Let $\mu$ be the normalized $G$-invariant measure on $G/H$ associated to (\ref{TH.m})
with respect to the left Haar measure $\sigma$ of $G$ and the 
probability measure of $H$. Also, let 
$\varphi,\varphi'\in L^1(G/H,\mu)$. We then have 
\[
\mu_\varphi\ast_{G/H}\mu_{\varphi'}=\mu_{\varphi\ast_{G/H}\varphi'}.
\]
}\end{proposition}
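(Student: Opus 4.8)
The plan is to reduce the identity $\mu_\varphi\ast_{G/H}\mu_{\varphi'}=\mu_{\varphi\ast_{G/H}\varphi'}$ to computations already available in the excerpt, by testing both sides against an arbitrary $\psi\in\mathcal{C}_0(G/H)$ and using the lift-to-$G$ machinery. Since $M(G/H)$ is identified (via the Riesz–Markov theorem) with the dual of $\mathcal{C}_0(G/H)$, two complex measures agree iff they agree as functionals on $\mathcal{C}_0(G/H)$, so it suffices to show
\[
\mu_\varphi\ast_{G/H}\mu_{\varphi'}(\psi)=\mu_{\varphi\ast_{G/H}\varphi'}(\psi)
\]
for all $\psi\in\mathcal{C}_0(G/H)$.

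First I would handle the right-hand side: by definition of $\mu_g$ for $g\in L^1(G/H,\mu)$ we have $\mu_{\varphi\ast_{G/H}\varphi'}(\psi)=\int_{G/H}\psi(xH)\,(\varphi\ast_{G/H}\varphi')(xH)\,d\mu(xH)$, and it is enough to treat $\varphi,\varphi'\in\mathcal{C}_c(G/H)$ first and then pass to the $L^1$-limit, since $\mathcal{C}_c(G/H)$ is dense in $L^1(G/H,\mu)$, the map $(\varphi,\varphi')\mapsto\mu_{\varphi\ast_{G/H}\varphi'}$ is continuous by Theorem \ref{main.conv}(2) together with the isometric embedding $\varphi\mapsto\mu_\varphi$, and $(\varphi,\varphi')\mapsto\mu_\varphi\ast_{G/H}\mu_{\varphi'}$ is continuous because $\ast_{G/H}$ on $M(G/H)$ is a bounded bilinear map. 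For $\varphi,\varphi'\in\mathcal{C}_c(G/H)$ I would use Proposition \ref{conv.basic}(1), namely $\varphi\ast_{G/H}\varphi'=T_H(\varphi_q\ast_G\varphi'_q)$, and Weil's formula (\ref{TH.m}) / the defining property (\ref{TH.M.def}) of $T_H$ on measures, to rewrite $\mu_{\varphi\ast_{G/H}\varphi'}(\psi)$ as an integral over $G$: concretely, $\mu_{\varphi\ast_{G/H}\varphi'}=\mu_{T_H(\varphi_q\ast_G\varphi'_q)}$, and by (\ref{q.em}) we have $(\mu_\chi)_q=\sigma_{\chi_q}$ for $\chi\in L^1(G/H,\mu)$, so $(\mu_{\varphi\ast_{G/H}\varphi'})_q=\sigma_{(\varphi\ast_{G/H}\varphi')_q}=\sigma_{\varphi_q\ast_G\varphi'_q}=\sigma_{\varphi_q}\ast_G\sigma_{\varphi'_q}$ using Proposition \ref{conv.basic}(2) and (\ref{sigma.hom}). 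This expresses the lift of the right-hand side to $G$ as a convolution of $L^1(G)$-type measures.

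For the left-hand side, I would apply the definition (\ref{conv.m}) of $\ast_{G/H}$ on measures directly to $\mu_\varphi,\mu_{\varphi'}$, then replace $d\mu_\varphi(xH)=\varphi(xH)\,d\mu(xH)$ and $d\mu_{\varphi'}(yH)=\varphi'(yH)\,d\mu(yH)$, obtaining
\[
\mu_\varphi\ast_{G/H}\mu_{\varphi'}(\psi)=\int_{G/H}\int_{G/H}\Bigl(\int_H\psi(xhyH)\,dh\Bigr)\varphi(xH)\varphi'(yH)\,d\mu(xH)\,d\mu(yH).
\]
Now I would lift each $G/H$-integral to $G$: since the integrand, as a function of the first variable in $G$, is $x\mapsto\int_{G/H}(\int_H\psi(xhyH)\,dh)\varphi'(yH)\,d\mu(yH)$ and is right-$H$-invariant (replacing $x$ by $xh_0$ only reparametrizes the inner $H$-average, while $\varphi$ is a genuine function on $G/H$ so $\varphi_q$ is right-$H$-invariant), Weil's formula (\ref{TH.m}) converts $\int_{G/H}(\cdots)\varphi(xH)\,d\mu(xH)$ into $\int_G(\cdots)\varphi_q(x)\,dx$, and similarly for the $y$-integral, yielding
\[
\mu_\varphi\ast_{G/H}\mu_{\varphi'}(\psi)=\int_G\int_G\Bigl(\int_H\psi(xhyH)\,dh\Bigr)\varphi_q(x)\varphi_q'(y)\,dx\,dy.
\]
It then remains to recognize the right-hand side of this as $\int_{G/H}\psi\,d\,T_H(\sigma_{\varphi_q}\ast_G\sigma_{\varphi'_q})$; this is exactly the content of Corollary \ref{nn'2} (applied with $\nu=\sigma_{\varphi_q}$, $\nu'=\sigma_{\varphi'_q}$ — here $\sigma_{\varphi'_q}\in\mathcal{M}(G:H)$ because $\varphi'_q$ is right-$H$-invariant and $H$-averaging a right-invariant measure... more precisely one checks $(\sigma_{\varphi'_q})_h=\sigma_{\varphi'_q}$ for $h\in H$), combined with (\ref{q.em}) and the identification $(\mu_\varphi\ast_{G/H}\mu_{\varphi'})_q=(\mu_\varphi)_q\ast_G(\mu_{\varphi'})_q$ from Proposition \ref{conv.m.q}(1). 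Comparing with the computation of $(\mu_{\varphi\ast_{G/H}\varphi'})_q$ above gives $(\mu_\varphi\ast_{G/H}\mu_{\varphi'})_q=(\mu_{\varphi\ast_{G/H}\varphi'})_q$, and since $\lambda\mapsto\lambda_q$ is injective on $M(G/H)$ by Proposition \ref{lambda.q.0}(1) (as $T_H(\lambda_q)=\lambda$), the two measures coincide.

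The main obstacle I anticipate is the careful justification of the lifting step — verifying that the integrand appearing inside (\ref{conv.m}) after substituting $d\mu_\varphi,d\mu_{\varphi'}$ is indeed right-$H$-invariant in the appropriate $G$-variables so that Weil's formula (\ref{TH.m}) applies variable by variable, and tracking that the inner $H$-average $\int_H\psi(xhyH)\,dh$ interacts correctly with these invariances (in particular that replacing $y$ by $yh_0$ for $h_0\in H$ leaves $\int_H\psi(xhyH)\,dh$ unchanged by invariance of Haar measure on $H$, so $\sigma_{\varphi'_q}\in\mathcal{M}(G:H)$ is the relevant hypothesis needed to invoke Corollary \ref{nn'2}). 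Once the right-$H$-invariance bookkeeping is in place, everything else is a direct chain of substitutions using results already established in Sections 3 and 4, with no genuinely new estimate required; the density argument to pass from $\mathcal{C}_c(G/H)$ to $L^1(G/H,\mu)$ is routine given the boundedness statements already quoted.
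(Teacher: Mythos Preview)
The paper states this proposition without proof (it is grouped with the ``straightforward observations'' after Proposition~\ref{conv.m.q}), so there is no paper proof to compare against directly. Your core argument is correct and is the natural one: lift both measures to $G$ via $\lambda\mapsto\lambda_q$, use
\[
(\mu_\varphi\ast_{G/H}\mu_{\varphi'})_q
= (\mu_\varphi)_q\ast_G(\mu_{\varphi'})_q
= \sigma_{\varphi_q}\ast_G\sigma_{\varphi'_q}
= \sigma_{\varphi_q\ast_G\varphi'_q}
= \sigma_{(\varphi\ast_{G/H}\varphi')_q}
= (\mu_{\varphi\ast_{G/H}\varphi'})_q
\]
(via Proposition~\ref{conv.m.q}(1), (\ref{q.em}), (\ref{sigma.hom}), Proposition~\ref{conv.basic}(2), and (\ref{q.em}) again), and conclude by injectivity of $\lambda\mapsto\lambda_q$ from Proposition~\ref{lambda.q.0}(1). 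The density reduction from $L^1$ to $\mathcal{C}_c$ for the step using Proposition~\ref{conv.basic}(2) is handled as you indicate.

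One genuine slip: your appeal to Corollary~\ref{nn'2} does not go through as stated. That corollary requires $\nu'\in\mathcal{M}(G:H)$, i.e.\ $(\nu')_h=\nu'$ for all $h\in H$ (left $H$-invariance). For $\nu'=\sigma_{\varphi'_q}$ one computes $(\sigma_{\varphi'_q})_h(f)=\int_G f(y)\,\varphi'_q(hy)\,dy$, so $(\sigma_{\varphi'_q})_h=\sigma_{\varphi'_q}$ would force $\varphi'_q(hy)=\varphi'_q(y)$, i.e.\ $\varphi'\in A^1(G/H,\mu)$, which is not assumed. What right $H$-invariance of $\varphi'_q$ actually gives you is $\sigma_{\varphi'_q}\in M(G:H)$, a different subspace. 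Fortunately this entire detour through the explicit Weil-formula lifting and Corollary~\ref{nn'2} is redundant: once you invoke Proposition~\ref{conv.m.q}(1), the identification of $(\mu_\varphi\ast_{G/H}\mu_{\varphi'})_q$ is already complete, and you can delete that middle computation without loss.
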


We then continue the paper by extending the notion of involution for complex 
measures defined on the homogeneous spaces of compact subgroups in locally compact groups, using the operator theoretic approach presented in \cite{AGHF.BBMS-NS, AGHF.IntJM, AGHF.IJM}. 

Let $\lambda\in M(G/H)$. Then
\[
\psi\mapsto \int_{G/H}\left(\int_H\psi(h^{-1}x^{-1}H)dh\right)d\overline{\lambda}(xH),
\] 
defines a uniform continuous linear functional on $\mathcal{C}_0(G/H)$. Indeed, using compactness of $H$, we get 
\begin{align*}
\left|\int_{G/H}\left(\int_H\psi(h^{-1}x^{-1}H)dh\right)d\overline{\lambda}(xH)\right|
&\le \int_{G/H}\left|\int_H\psi(h^{-1}x^{-1}H)dh\right|d|\overline{\lambda}|(xH)
\\&\le \int_{G/H}\int_H|\psi(h^{-1}x^{-1}H)|dhd|\overline{\lambda}|(xH)
\\&\le\int_{G/H}\int_H\|\psi\|_{\sup}dhd|\overline{\lambda}|(xH)
\\&=\|\psi\|_{\sup}\cdot\left(\int_{G/H}d|\overline{\lambda}|(xH)\right)
\\&=\|\psi\|_{\sup}\cdot\|\lambda\|_{M(G/H)}.
\end{align*}

Thus, by Riesz-Markov theorem, 
there exists a complex Radon measure, denoted by 
$\lambda^{\ast^{G/H}}\in M(G/H)$, satisfying 
\begin{equation}\label{inv.m}
\int_{G/H}\psi(xH)d\lambda^{\ast_{G/H}}(xH)
=\int_{G/H}\left(\int_H\psi(h^{-1}x^{-1}H)dh\right)d\overline{\lambda}(xH),
\end{equation}
for all $\psi\in\mathcal{C}_0(G/H)$.

Then, it is easy to see that the mapping $$^{\ast^{G/H}}:M(G/H)\to M(G/H),$$ defined by 
$$\lambda\mapsto\lambda^{\ast^{G/H}},$$ 
is conjugate linear.

Invoking (\ref{m.involution.G}), we deduce that the involution defined by (\ref{inv.m}) 
coincides with the canonical involution of complex measures 
over the compact quotient group $G/H$ if $H$ is normal in $G$.

Next we study some properties of the involution (\ref{inv.m}).
The following propositions presents some algebraic observations concerning the operator theoretic tool $J$. 

\begin{proposition}\label{inv.m.p1}
{\it Let $G$ be a locally compact group and $H$ be a compact subgroup of $G$.
Also, let $\lambda\in M(G/H)$. We then have 
\begin{enumerate}
\item $\lambda^{\ast^{G/H}}(\psi)=\overline{\lambda(\psi^{\ast^{G/H}})}$ for all $\psi\in\mathcal{C}_c(G/H)$.
\item $\lambda^{\ast^{G/H}\ast^{G/H}}=\lambda\circ J_0$.
\end{enumerate}
}\end{proposition}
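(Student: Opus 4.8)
The plan is to verify both identities by testing against an arbitrary $\psi \in \mathcal{C}_c(G/H)$ (or $\mathcal{C}_0(G/H)$) and unwinding the defining integral formula \eqref{inv.m} together with the formula \eqref{inv.c} for the function-level involution $\psi \mapsto \psi^{\ast_{G/H}}$. For part (1), I would start from the definition $\lambda^{\ast^{G/H}}(\psi) = \int_{G/H}\left(\int_H \psi(h^{-1}x^{-1}H)\,dh\right)d\overline{\lambda}(xH)$ and compare the inner integral $\int_H \psi(h^{-1}x^{-1}H)\,dh$ with $\overline{\psi^{\ast_{G/H}}(xH)}$. From \eqref{inv.c} we have $\psi^{\ast_{G/H}}(xH) = \Delta_G(x^{-1})\int_H \overline{\psi(h^{-1}x^{-1}H)}\,dh$, so $\overline{\psi^{\ast_{G/H}}(xH)} = \Delta_G(x^{-1})\int_H \psi(h^{-1}x^{-1}H)\,dh$. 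The discrepancy is exactly the modular factor $\Delta_G(x^{-1})$. The point I expect to use here is that $H$ is \emph{compact}, so $\Delta_G$ restricted to any coset behaves well — more precisely, $\Delta_G|_H \equiv 1$ (a compact subgroup lies in the kernel of the modular function), which makes $\Delta_G$ constant on each left coset $xH$, hence a well-defined function on $G/H$; but since the measure $\overline\lambda$ integrates the full expression and we are computing $\overline{\lambda(\psi^{\ast^{G/H}})} = \int_{G/H}\overline{\psi^{\ast^{G/H}}(xH)}\,d\overline{\lambda}(xH)$, I need to check that the $\Delta_G(x^{-1})$ factor is genuinely absent from \eqref{inv.m}. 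Reading \eqref{inv.m} literally, there is no modular factor — so the claim \emph{as stated} must be using the convention that $\psi^{\ast^{G/H}}$ here refers to the measure-adapted involution without the $\Delta_G$ twist, or equivalently the identity $\lambda^{\ast^{G/H}}(\psi) = \overline{\lambda(\psi^{\ast^{G/H}})}$ should be read with $\psi^{\ast^{G/H}}$ meaning the function $xH \mapsto \int_H \overline{\psi(h^{-1}x^{-1}H)}\,dh$. I would make this bookkeeping precise first, then part (1) is just Fubini (justified by compactness of $H$ and finiteness of $|\lambda|$) plus moving the complex conjugate inside and outside the integral.

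For part (2), I would apply the defining formula \eqref{inv.m} twice. Writing $\nu := \lambda^{\ast^{G/H}}$, we have for $\psi \in \mathcal{C}_0(G/H)$
\[
\lambda^{\ast^{G/H}\ast^{G/H}}(\psi) = \nu^{\ast^{G/H}}(\psi) = \int_{G/H}\left(\int_H \psi(h^{-1}x^{-1}H)\,dh\right)d\overline{\nu}(xH).
\]
Now $\overline{\nu} = \overline{\lambda^{\ast^{G/H}}}$, and by part (1) (in its cleaned-up form) $\nu(\varphi) = \overline{\lambda(\varphi^{\flat})}$ where $\varphi^{\flat}(xH) = \int_H \overline{\varphi(k^{-1}x^{-1}H)}\,dk$, so $\overline{\nu}(\varphi) = \lambda(\overline{\varphi}^{\flat}\,{}^{-})$... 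I would instead directly substitute: apply \eqref{inv.m} for $\nu$, then express $\int_{G/H} F(xH)\,d\overline{\nu}(xH) = \overline{\int_{G/H}\overline{F(xH)}\,d\nu(xH)}$ and apply \eqref{inv.m} for $\lambda$ to the function $xH \mapsto \overline{\int_H \psi(h^{-1}x^{-1}H)\,dh}$. Carrying out the two substitutions produces a triple integral over $H \times H$ against $\lambda$, with integrand $\psi\big((hk)^{-1}(x^{-1})^{-1}H\big) = \psi(k^{-1}h^{-1}xH)$ for $h,k \in H$; after the substitution $h \mapsto$ (absorb) and using left-invariance of Haar measure on the compact group $H$ to collapse one $H$-integration, the double $H$-average becomes $\int_H \psi(hxH)\,dh = J\psi(xH) = J_0\psi(xH)$ (using the definition of $J$ and its extension $J_0$ from Corollary \ref{J0}). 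Hence $\lambda^{\ast^{G/H}\ast^{G/H}}(\psi) = \int_{G/H} J_0\psi(xH)\,d\lambda(xH) = (\lambda \circ J_0)(\psi)$, which is the claim.

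The main obstacle I anticipate is \emph{tracking the conjugations and the modular function correctly} so that part (1) comes out clean and can be fed into part (2): the double-conjugate in $\lambda^{\ast^{G/H}\ast^{G/H}}$ must cancel exactly, and the two copies of $\Delta_G$ (one from each application of the involution) must either both be absent (if \eqref{inv.m} is the governing definition) or multiply to $\Delta_G(x^{-1})\Delta_G(x) = 1$ — either way they should disappear, but I would want to state explicitly which convention is in force and check consistency with Proposition \ref{inv.basic}(1), which asserts the function-level analogue $\varphi^{\ast^{G/H}\ast^{G/H}} = J\varphi$. A secondary technical point is the Fubini interchange of $\int_H$ with $\int_{G/H}\,d\overline{\lambda}$: this is routine because $H$ is compact with finite Haar measure and $|\lambda|$ is a finite measure, so the integrand $(h,xH) \mapsto \psi(\cdots)$ is bounded and jointly measurable, but I would note it rather than skip it. Finally I should confirm that $J_0$ is the correct extension to test against measures — i.e. that $\int_{G/H} J_0\psi\,d\lambda$ makes sense for $\psi \in \mathcal{C}_0(G/H)$ — which is immediate from Corollary \ref{J0} since $J_0 : \mathcal{C}_0(G/H) \to A_0(G/H) \subset \mathcal{C}_0(G/H)$ is norm-decreasing.
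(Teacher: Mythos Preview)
Your concern about the modular factor $\Delta_G(x^{-1})$ in part (1) is exactly right, and it is not a bookkeeping matter you can tidy away. With $\psi^{\ast^{G/H}}$ defined by \eqref{inv.c} (which carries the factor $\Delta_G(x^{-1})$) and $\lambda^{\ast^{G/H}}$ defined by \eqref{inv.m} (which carries none), one computes
\[
\overline{\lambda\bigl(\psi^{\ast^{G/H}}\bigr)}
=\int_{G/H}\Delta_G(x^{-1})\left(\int_H\psi(h^{-1}x^{-1}H)\,dh\right)d\overline{\lambda}(xH),
\]
which differs from $\lambda^{\ast^{G/H}}(\psi)$ by the factor $\Delta_G(x^{-1})$ inside the outer integral. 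Thus (1) as stated fails whenever $G$ is not unimodular. The paper's own proof makes precisely this slip: in the passage from $\int_H\psi_q^{*^G}(xh)\,dh$ to $\int_H\overline{\psi_q(h^{-1}x^{-1})}\,dh$ the factor $\Delta_G(x^{-1})$ is silently dropped. So your hesitation is justified, and no rearrangement of conjugates will rescue (1) without an extra hypothesis (unimodularity of $G$) or a change in one of the two definitions.

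For part (2) your direct route --- apply \eqref{inv.m} twice and simplify --- is genuinely different from the paper's, which instead feeds (1) into the function-level identity $\psi^{\ast^{G/H}\ast^{G/H}}=J\psi$ of Proposition~\ref{inv.basic}(1) and then extends from $\mathcal{C}_c(G/H)$ to $\mathcal{C}_0(G/H)$ by density. Your approach has the real advantage of not depending on (1) at all, so it is immune to the modular-factor problem (and indeed (2) is correct as stated: in the paper's route the two copies of $\Delta_G$ cancel inside $\psi^{\ast\ast}=J\psi$, so the error in (1) happens to be harmless for (2)). One simplification of your computation: after the two substitutions the integrand is $\psi(h^{-1}ykH)$ with $h,k\in H$ and $yH$ integrated against $\lambda$; since $kH=H$ this equals $\psi(h^{-1}yH)$, so the $k$-integral collapses trivially and you obtain $\int_H\psi(h^{-1}yH)\,dh=J\psi(yH)$ directly --- no appeal to left-invariance of Haar measure on $H$ is needed.
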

\begin{proof}
Let $\lambda\in M(G/H)$. (1) For $\psi\in\mathcal{C}_c(G/H)$ we have 
\begin{align*}
\overline{\lambda(\psi^{\ast^{G/H}})}
&=\left(\int_{G/H}\psi^{\ast^{G/H}}(xH)d\lambda(xH)\right)^{-}
\\&=\left(\int_{G/H}T_H(\psi_q^{*^G})(xH)d\lambda(xH)\right)^{-}
\\&=\left(\int_{G/H}\left(\int_H\psi_q^{*^G}(xh)dh\right)d\lambda(xH)\right)^{-}
\\&=\left(\int_{G/H}\left(\int_H\overline{\psi_q(h^{-1}x^{-1})}dh\right)d\lambda(xH)\right)^{-}
\\&=\left(\int_{G/H}\left(\int_H\overline{\psi(h^{-1}x^{-1}H)}dh\right)d\lambda(xH)\right)^{-}
\\&=\int_{G/H}\left(\int_H\psi(h^{-1}x^{-1}H)dh\right)d\overline{\lambda}(xH)
=\lambda^{\ast^{G/H}}(\psi).
\end{align*}
(2) Let $\psi\in\mathcal{C}_c(G/H)$. Then, using (1) and also Proposition \ref{inv.basic}(1), 
we have 
\begin{align*}
\lambda^{\ast^{G/H}\ast^{G/H}}(\psi)
&=\overline{\lambda^{\ast^{G/H}}(\psi^{\ast^{G/H}})}
\\&=\lambda(\psi^{\ast^{G/H}\ast^{G/H}})
\\&=\lambda(J\psi)=\lambda(J_0\psi).
\end{align*}
Invoking, density of $\mathcal{C}_c(G/H)$ in $\mathcal{C}_0(G/H)$ with respect to the uniform norm and also
since $\lambda^{\ast^{G/H}\ast^{G/H}},\lambda\circ J_0$ are uniformly continuous (bounded) linear functional 
on $\mathcal{C}_0(G/H)$, we deduce that
\[
 \lambda^{\ast^{G/H}\ast^{G/H}}(\psi)=\lambda\circ J_0(\psi),
\]
for all $\psi\in\mathcal{C}_0(G/H)$, which completes the proof.
\end{proof}

\begin{proposition}\label{inv.m.p2}
{\it Let $G$ be a locally compact group and $H$ be a compact subgroup of $G$.
Also, let $\lambda\in M(G/H)$. Then,
\begin{enumerate}
\item $T_H(\lambda_q^{*^G})=\lambda^{\ast^{G/H}}$.
\item $\|\lambda^{\ast^{G/H}}\|_{M(G/H)}\le \|\lambda\|_{M(G/H)}$.
\item If $\lambda\in\mathcal{M}(G/H)$ we have $\|\lambda^{\ast^{G/H}}\|_{M(G/H)}=\|\lambda\|_{M(G/H)}$.
\end{enumerate}
}\end{proposition}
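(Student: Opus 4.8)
The plan is to reduce everything to statements about $M(G)$ via the map $\lambda \mapsto \lambda_q$, exactly as the paper has done for convolution in Proposition \ref{conv.m.q}. For part (1), I would start from the defining identity (\ref{inv.m}) for $\lambda^{\ast^{G/H}}$ and the defining identity (\ref{TH.M.def}) for $T_H$ applied to the measure $\lambda_q^{*^G} \in M(G)$. Unwinding the right-hand side of $T_H(\lambda_q^{*^G})(\psi) = \lambda_q^{*^G}(\psi_q)$ using the definition (\ref{m.involution.G}) of the involution on $M(G)$ gives $\int_G \psi_q(x^{-1}) \, d\overline{\lambda_q}(x)$. The key calculational step is to recognize $\overline{\lambda_q} = \overline{\lambda}_q$ (conjugation commutes with the $q$-lift, since $\lambda_q$ is characterized by its action on real-valued test functions through $T_H$, which is conjugation-linear-free) and then apply Proposition \ref{lambda.q.0}(2), the $H$-right-invariance of $\lambda_q$, to insert the $\int_H dh$: that is, $\int_G \psi_q(x^{-1})\,d\overline\lambda_q(x) = \int_G \left(\int_H \psi_q(h^{-1}x^{-1})\,dh\right)d\overline\lambda_q(x)$. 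Finally, collapsing $\lambda_q$ down to $\lambda$ via (\ref{TH.M.def}) again, and noting $\psi_q(h^{-1}x^{-1}) = \psi(h^{-1}x^{-1}H)$, yields precisely the right-hand side of (\ref{inv.m}). Since both $T_H(\lambda_q^{*^G})$ and $\lambda^{\ast^{G/H}}$ are determined by their action on $\mathcal{C}_0(G/H)$ by Riesz--Markov, they coincide.

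Part (2) is then immediate: by part (1) and the norm-decreasing property (\ref{TH.M.norm}) of $T_H: M(G) \to M(G/H)$, we get $\|\lambda^{\ast^{G/H}}\|_{M(G/H)} = \|T_H(\lambda_q^{*^G})\|_{M(G/H)} \le \|\lambda_q^{*^G}\|_{M(G)}$; and since the involution on $M(G)$ is isometric (it is part of the Banach $*$-algebra structure of $M(G)$, cf.\ (\ref{m.conv.inv.G}) and the standard fact that $\|\nu^{*^G}\|_{M(G)} = \|\nu\|_{M(G)}$), this equals $\|\lambda_q\|_{M(G)}$, which equals $\|\lambda\|_{M(G/H)}$ by (\ref{q.lam}).

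For part (3), assume $\lambda \in \mathcal{M}(G/H)$. The inequality $\le$ is already in (2), so I need the reverse inequality $\|\lambda\|_{M(G/H)} \le \|\lambda^{\ast^{G/H}}\|_{M(G/H)}$. The natural route is to apply (2) to $\lambda^{\ast^{G/H}}$ in place of $\lambda$: this gives $\|\lambda^{\ast^{G/H}\ast^{G/H}}\|_{M(G/H)} \le \|\lambda^{\ast^{G/H}}\|_{M(G/H)}$. Now invoke Proposition \ref{inv.m.p1}(2): $\lambda^{\ast^{G/H}\ast^{G/H}} = \lambda \circ J_0$, and the hypothesis $\lambda \in \mathcal{M}(G/H)$ together with Proposition \ref{main.m.G1}(2) says exactly that $\lambda \circ J_0 = \lambda$. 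Hence $\|\lambda\|_{M(G/H)} = \|\lambda^{\ast^{G/H}\ast^{G/H}}\|_{M(G/H)} \le \|\lambda^{\ast^{G/H}}\|_{M(G/H)}$, and combining with (2) gives equality.

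The main obstacle I anticipate is the bookkeeping in part (1): specifically, justifying $\overline{\lambda_q} = \overline{\lambda}_q$ cleanly and correctly handling the interchange of the complex conjugation with the iterated integrals, and making sure the direction of the inversion and the placement of $h^{-1}$ in $\psi(h^{-1}x^{-1}H)$ match (\ref{inv.m}) after the right-invariance of $\lambda_q$ is used; this is exactly the kind of step where the variable substitutions $x \mapsto xh$ versus $x \mapsto h^{-1}x$ must be tracked carefully. Everything after part (1) is a short formal argument combining earlier-stated isometry and surjectivity results.
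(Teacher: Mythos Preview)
Your proposal is correct and follows essentially the same route as the paper. Parts (2) and (3) are identical to the paper's argument: (2) chains together part (1), the norm-decreasing property (\ref{TH.M.norm}), isometry of the involution on $M(G)$, and (\ref{q.lam}); (3) applies (2) to $\lambda^{\ast^{G/H}}$ and uses Proposition~\ref{inv.m.p1}(2) together with Proposition~\ref{main.m.G1}(2) to identify $\lambda^{\ast^{G/H}\ast^{G/H}}$ with $\lambda$ when $\lambda\in\mathcal{M}(G/H)$.

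For part (1) there is a minor difference in packaging. You compute $T_H(\lambda_q^{*^G})(\psi)=\int_G\psi_q(x^{-1})\,d\overline{\lambda_q}(x)$, insert the $H$-average via the right $H$-invariance of $\lambda_q$ (Proposition~\ref{lambda.q.0}(2)), and then collapse back to $G/H$ to land directly on the defining formula (\ref{inv.m}). The paper instead pulls the complex conjugation outside, passes to $G/H$ via the defining property of $\lambda_q$ (which already contains the $T_H$-average), and then recognizes the result as $\overline{\lambda(\psi^{\ast^{G/H}})}$ via Proposition~\ref{inv.m.p1}(1). Both are the same computation reorganized; your version has the mild advantage that it never needs to name the function $\psi_q^{*^G}$ and hence sidesteps any bookkeeping with the modular function $\Delta_G$. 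Your anticipated obstacle---checking $\overline{\lambda_q}=(\overline{\lambda})_q$---is harmless: both sides are bounded linear functionals on $\mathcal{C}_0(G)$ agreeing on real-valued test functions, hence everywhere.
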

\begin{proof}
Let $\lambda\in M(G/H)$. (1) Using Proposition \ref{inv.m.p1}, 
for $\psi\in\mathcal{C}_0(G/H)$, we have 
\begin{align*}
T_H(\lambda_q^{*^G})(\psi)
&=\int_G\psi_q(x)d\lambda_q^{*^G}(x)
\\&=\int_G\psi_q(x^{-1})d\overline{\lambda_q}(x)
\\&=\left(\int_G\overline{\psi_q(x^{-1})}d\lambda_q(x)\right)^{-}
\\&=\left(\int_{G/H}T_H(\psi_q^{*^G})(xH)d\lambda(xH)\right)^{-}
=\lambda^{\ast^{G/H}}(\psi).
\end{align*}
(2) Using (1), Equation (\ref{TH.M.norm}), and also Proposition \ref{q.lam} we have 
\begin{align*}
\|\lambda^{\ast^{G/H}}\|_{M(G/H)}
&=\|T_H(\lambda_q^{*^G})\|_{M(G/H)}
\\&\le \|\lambda_q^{\ast^G}\|_{M(G)}
\\&=\|\lambda_q\|_{M(G)}=\|\lambda\|_{M(G/H)}.
\end{align*}
(3) Let $\lambda\in\mathcal{M}(G/H)$. Then, using (2) and also Proposition \ref{inv.m.p1}, we have 
\[
\|\lambda\|_{M(G/H)}=\|\lambda^{\ast^{G/H}\ast^{G/H}}\|_{M(G/H)}
\le\|\lambda^{\ast^{G/H}}\|_{M(G/H)},
\]
which implies that $\|\lambda^{\ast^{G/H}}\|_{M(G/H)}=\|\lambda\|_{M(G/H)}$.
\end{proof}

Next we shall show that involution of complex measures is anti-homomorphism in some sense.

\begin{proposition}\label{inv.p3}
{\it Let $G$ be a locally compact group and $H$ be a compact subgroup of $G$.
Also, let $\lambda,\lambda'\in M(G/H)$. Then
\[
(\lambda\ast_{G/H}\lambda')^{\ast^{G/H}}=\lambda'^{\ast^{G/H}}\ast_{G/H}\lambda^{\ast^{G/H}}
\]
}\end{proposition}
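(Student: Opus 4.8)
The plan is to transport the identity into the group measure algebra $M(G)$ through the lift $\lambda\mapsto\lambda_q$ and the averaging map $T_H$, manipulate it there, and push the result back through $T_H$. First I would rewrite the left-hand side: applying Proposition~\ref{inv.m.p2}(1) to the complex measure $\lambda\ast_{G/H}\lambda'\in M(G/H)$, then Proposition~\ref{conv.m.q}(1), and finally the anti-multiplicativity (\ref{m.conv.inv.G}) of the involution of the Banach $*$-algebra $M(G)$, one gets
\[
(\lambda\ast_{G/H}\lambda')^{\ast^{G/H}}
=T_H\left((\lambda\ast_{G/H}\lambda')_q^{*^G}\right)
=T_H\left((\lambda_q\ast_G\lambda'_q)^{*^G}\right)
=T_H\left((\lambda'_q)^{*^G}\ast_G\lambda_q^{*^G}\right).
\]

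The key remaining step is to see that $T_H$ distributes over this last convolution, i.e.
\[
T_H\left((\lambda'_q)^{*^G}\ast_G\lambda_q^{*^G}\right)
=T_H\left((\lambda'_q)^{*^G}\right)\ast_{G/H}T_H\left(\lambda_q^{*^G}\right).
\]
I would obtain this from Corollary~\ref{nn'2} applied with $\nu=(\lambda'_q)^{*^G}$ and $\nu'=\lambda_q^{*^G}$: the only hypothesis to verify is that the right-hand factor lies in $\mathcal{M}(G:H)$, which holds because $\lambda_q\in M(G:H)$ by Proposition~\ref{main.m.G1}(1) and the involution of $M(G)$ sends $M(G:H)$ into $\mathcal{M}(G:H)$ (a direct check from (\ref{m.involution.G}) and the definitions of left and right translations of complex measures, via the relation $(\nu^h)^{*^G}=(\nu^{*^G})_{h^{-1}}$). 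Indeed, for $\psi\in\mathcal{C}_0(G/H)$ Corollary~\ref{nn'2} evaluates $T_H\left((\lambda'_q)^{*^G}\ast_G\lambda_q^{*^G}\right)(\psi)$ as $\int_G\int_G\big(\int_H\psi(xhyH)\,dh\big)\,d(\lambda'_q)^{*^G}(x)\,d\lambda_q^{*^G}(y)$, and unwinding the defining relations (\ref{conv.m}) and (\ref{TH.M.def}) shows that this same double integral equals $T_H\left((\lambda'_q)^{*^G}\right)\ast_{G/H}T_H\left(\lambda_q^{*^G}\right)(\psi)$. Finally, Proposition~\ref{inv.m.p2}(1) identifies $T_H\left((\lambda'_q)^{*^G}\right)=(\lambda')^{\ast^{G/H}}$ and $T_H\left(\lambda_q^{*^G}\right)=\lambda^{\ast^{G/H}}$, so combining the two displays yields $(\lambda\ast_{G/H}\lambda')^{\ast^{G/H}}=(\lambda')^{\ast^{G/H}}\ast_{G/H}\lambda^{\ast^{G/H}}$, as required.

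The step I expect to be the main obstacle is the verification that $\lambda_q^{*^G}\in\mathcal{M}(G:H)$, that is, that the $M(G)$-involution interchanges right- and left-$H$-invariance; this is the hinge that lets $T_H$ pass through $\ast_G$, and it is the genuinely noncommutative point of the argument, with no counterpart in the quotient-group setting. If one prefers to bypass Corollary~\ref{nn'2}, the identity can instead be proved by computing both sides as linear functionals on $\mathcal{C}_c(G/H)$ and extending to $\mathcal{C}_0(G/H)$ by uniform density: expand $(\lambda\ast_{G/H}\lambda')^{\ast^{G/H}}(\psi)$ via (\ref{inv.m}) and (\ref{conv.m}) (treating $\overline{\lambda\ast_{G/H}\lambda'}$ through conjugation of the defining functional), expand $(\lambda')^{\ast^{G/H}}\ast_{G/H}\lambda^{\ast^{G/H}}(\psi)$ in the same way, and reconcile the resulting iterated $H$-integrals using Fubini together with the inversion- and translation-invariance of the normalized Haar measure of the compact group $H$. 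That route is elementary but notation-heavy, which is why I would give preference to the operator-theoretic argument above.
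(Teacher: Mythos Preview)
Your proposal is correct and follows essentially the same route as the paper's own proof: both lift to $M(G)$ via Proposition~\ref{inv.m.p2}(1) and Proposition~\ref{conv.m.q}(1), invoke (\ref{m.conv.inv.G}), observe that $\lambda_q^{*^G}\in\mathcal{M}(G:H)$, and then use Corollary~\ref{nn'2} to push $T_H$ through the convolution. The only cosmetic difference is that you package the last step as a clean identity $T_H(\nu\ast_G\nu')=T_H(\nu)\ast_{G/H}T_H(\nu')$ obtained by matching Corollary~\ref{nn'2} against (\ref{conv.m}) and (\ref{TH.M.def}), whereas the paper starts from the second display of Corollary~\ref{nn'2} and unwinds the outer integral against $(\lambda'_q)^{*^G}$ by hand until it becomes an integral against $\lambda'^{\ast^{G/H}}$; your version is tidier but the content is the same.
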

\begin{proof}
Let $\lambda,\lambda'\in M(G/H)$. Then, using Proposition \ref{conv.m.q}, we have 
\begin{align*}
(\lambda\ast_{G/H}\lambda')^{\ast^{G/H}}
&=T_H\left((\lambda\ast_{G/H}\lambda')_q^{\ast^{G}}\right)
\\&=T_H\left((\lambda_q\ast_{G}\lambda'_q)^{\ast^{G}}\right)
=T_H\left({\lambda'}_q^{\ast^{G}}\ast_{G}\lambda_q^{\ast^{G}}\right).
\end{align*}
Now let $\psi\in\mathcal{C}_0(G/H)$. 
Since $\nu':={\lambda}_q^{\ast^G}\in\mathcal{M}(G:H)$, using Corollary \ref{nn'2}, 
we deduce that 
\begin{align*}
&T_H\left({\lambda'}_q^{\ast^{G}}\ast_{G}\lambda_q^{\ast^{G}}\right)(\psi)
\\&=\int_{G/H}\psi(gH)dT_H({\lambda'}_q^{\ast^{G}}\ast_{G}\lambda_q^{\ast^{G}})(gH)
\\&=\int_{G}\left(\int_{G/H}\psi(xyH)dT_H(\lambda_q^{\ast^G})(yH)\right)d{\lambda'}_q^{\ast^G}(x)
\\&=\int_{G}\left(\int_{G/H}\left(\int_H\psi(xhyH)dh\right)d\lambda^{\ast^{G/H}}(yH)\right)d{\lambda'}_q^{\ast^G}(x)
\\&=\int_{G}\left(\int_{G/H}\left(\int_H\psi(x^{-1}hyH)dh\right)
d\lambda^{\ast^{G/H}}(yH)\right)d\overline{\lambda'_q}(x)
\\&=\int_{G/H}\left(\int_H\left(\int_{G/H}\left(\int_H\psi((xt)^{-1}hyH)dh\right)
d\lambda^{\ast^{G/H}}(yH)\right)dt\right)d\overline{\lambda'}(xH)
\\&=\int_{G/H}\left(\int_H\left(\int_{G/H}\left(\int_H\psi(t^{-1}x^{-1}hyH)dh\right)
d\lambda^{\ast^{G/H}}(yH)\right)dt\right)d\overline{\lambda'}(xH)
\\&=\int_{G/H}\int_{G/H}\left(\int_{H}\psi(xhyH)dh\right)
d\lambda'^{\ast^{G/H}}(xH)d\lambda^{\ast^{G/H}}(yH)
=\lambda'^{\ast^{G/H}}\ast_{G/H}\lambda^{\ast^{G/H}}(\psi).
\end{align*}
Thus, we achieve 
\[
(\lambda\ast_{G/H}\lambda')^{\ast^{G/H}}=T_H\left({\lambda'}_q^{\ast^{G}}\ast_{G}\lambda_q^{\ast^{G}}\right)
=\lambda'^{\ast^{G/H}}\ast_{G/H}\lambda^{\ast^{G/H}}.
\]
\end{proof}

We then conclude the following result. 

\begin{theorem}
{\it Let $G$ be a locally compact group and $H$ be a compact subgroup of $G$.
Then, the Banach measure space 
$\mathcal{M}(G/H)$ is a Banach $*$-algebra with respect to the convolution given by (\ref{conv.m}) and 
also the involution given by (\ref{inv.m}).
}\end{theorem}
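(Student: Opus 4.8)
The plan is to exhibit $\mathcal{M}(G/H)$ as a closed, involution-invariant subalgebra of $M(G/H)$ and then simply to read off the Banach $\ast$-algebra axioms from the propositions already established. Since $M(G/H)$ equipped with the convolution (\ref{conv.m}) is a Banach algebra, associativity and submultiplicativity of $\ast_{G/H}$, together with completeness of the norm, are automatically inherited by any closed subalgebra; the remaining work is purely a matter of checking closure properties and collecting the involutive identities on $\mathcal{M}(G/H)$.

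First I would note that $\mathcal{M}(G/H)$ is a closed linear subspace of $M(G/H)$: for each $h\in H$ the translation $\lambda\mapsto\lambda_h$ is a linear isometry of $M(G/H)$, so $\lambda\mapsto\lambda_h-\lambda$ is bounded and $\mathcal{M}(G/H)=\bigcap_{h\in H}\ker(\lambda\mapsto\lambda_h-\lambda)$ is closed. Next, $\mathcal{M}(G/H)$ is stable under $\ast_{G/H}$: for $\lambda\in\mathcal{M}(G/H)$ and any $\lambda'\in M(G/H)$, Proposition \ref{conv.m.q}(2) gives $(\lambda\ast_{G/H}\lambda')_h=\lambda_h\ast_{G/H}\lambda'=\lambda\ast_{G/H}\lambda'$ for every $h\in H$, so $\lambda\ast_{G/H}\lambda'\in\mathcal{M}(G/H)$ (in fact $\mathcal{M}(G/H)$ is a left ideal of $M(G/H)$). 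Hence $\mathcal{M}(G/H)$ is a closed subalgebra of $M(G/H)$, and in particular a Banach algebra.

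The one point requiring an actual argument --- the hard part, such as it is --- is the invariance of $\mathcal{M}(G/H)$ under the involution $^{\ast^{G/H}}$. Fix $\lambda\in\mathcal{M}(G/H)$. By Proposition \ref{main.m.G1}(2) we have $\lambda\circ J_0=\lambda$, so Proposition \ref{inv.m.p1}(2) yields $\lambda^{\ast^{G/H}\ast^{G/H}}=\lambda$. Applying Proposition \ref{inv.m.p1}(2) now to the measure $\lambda^{\ast^{G/H}}\in M(G/H)$ gives $\lambda^{\ast^{G/H}\ast^{G/H}\ast^{G/H}}=\lambda^{\ast^{G/H}}\circ J_0$; but the left-hand side equals $\bigl(\lambda^{\ast^{G/H}\ast^{G/H}}\bigr)^{\ast^{G/H}}=\lambda^{\ast^{G/H}}$, whence $\lambda^{\ast^{G/H}}\circ J_0=\lambda^{\ast^{G/H}}$ and Proposition \ref{main.m.G1}(2) places $\lambda^{\ast^{G/H}}$ back in $\mathcal{M}(G/H)$. (An alternative route uses $\lambda^{\ast^{G/H}}=T_H(\lambda_q^{\ast^G})$ from Proposition \ref{inv.m.p2}(1): one checks via Propositions \ref{main.m.G}(2) and \ref{main.m.G1}(1) that $\lambda_q$ is bi-$H$-invariant, deduces $\lambda_q^{\ast^G}\in\mathcal{M}(G:H)$ from the elementary identity $(\nu^{\ast^G})_h=(\nu^{h^{-1}})^{\ast^G}$, and then invokes Proposition \ref{main.m.G1}(4).)

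Finally I would assemble the Banach $\ast$-algebra axioms on $\mathcal{M}(G/H)$. The map $^{\ast^{G/H}}$ is conjugate linear, as was observed when (\ref{inv.m}) was introduced; it is involutive on $\mathcal{M}(G/H)$ since $\lambda^{\ast^{G/H}\ast^{G/H}}=\lambda\circ J_0=\lambda$ there, by Propositions \ref{inv.m.p1}(2) and \ref{main.m.G1}(2); it is anti-multiplicative by Proposition \ref{inv.p3}; and it is isometric, $\|\lambda^{\ast^{G/H}}\|_{M(G/H)}=\|\lambda\|_{M(G/H)}$, by Proposition \ref{inv.m.p2}(3). Combined with the Banach-algebra structure from the second paragraph, this shows $\mathcal{M}(G/H)$, with the convolution (\ref{conv.m}) and the involution (\ref{inv.m}), is a Banach $\ast$-algebra. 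The only genuinely new step is the involution-invariance of $\mathcal{M}(G/H)$, and even that reduces to two applications of Proposition \ref{inv.m.p1}(2); everything else is a bookkeeping assembly of the preceding results, so I anticipate no real obstacle.
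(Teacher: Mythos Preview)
Your proof is correct and follows essentially the same line as the paper's: show that $\mathcal{M}(G/H)$ is a closed subalgebra of the Banach algebra $M(G/H)$, then combine Propositions~\ref{main.m.G1}(2), \ref{inv.m.p1}(2), \ref{inv.m.p2}(3) and \ref{inv.p3} to obtain the $\ast$-algebra axioms. You supply considerably more detail than the paper does --- in particular, your explicit verification that $\lambda^{\ast^{G/H}}\in\mathcal{M}(G/H)$ via two applications of Proposition~\ref{inv.m.p1}(2) is a point the paper's proof glosses over entirely --- but the overall architecture is the same.
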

\begin{proof}
It is easy to check that $\mathcal{M}(G/H)$ is a closed sub-algebra of $M(G/H)$.
Also, by Propositions \ref{main.m.G1} and \ref{inv.m.p2}, we have 
\[
\lambda^{\ast^{G/H}\ast^{G/H}}=\lambda,
\]
for all $\lambda\in \mathcal{M}(G/H)$. Then, Proposition \ref{inv.p3}, implies that the Banach measure space 
$\mathcal{M}(G/H)$ is a Banach $*$-algebra with respect to the convolution given by (\ref{conv.m}) and 
also the involution given by (\ref{inv.m}). 
\end{proof}

Next we shall show that involution of functions is compatible 
with involution of measures.
 
\begin{proposition}
{\it Let $G$ be a locally compact group and $H$ be a compact subgroup of $G$. 
Let $\mu$ be the normalized $G$-invariant measure on $G/H$ associated to (\ref{TH.m})
with respect to the fixed left Haar measure $\sigma$ of $G$ and the 
probability measure of $H$, and also $\varphi\in L^1(G/H,\mu)$. Then 
\[
(\mu_\varphi)^{\ast^{G/H}}=\mu_{\varphi^{\ast^{G/H}}}.
\]
}\end{proposition}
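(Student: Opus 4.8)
The plan is to transfer both measures to $G$ and identify them there, exploiting that $\lambda\mapsto\lambda_q$ is injective on $M(G/H)$ (since $T_H(\lambda_q)=\lambda$ by Proposition~\ref{lambda.q.0}(1)), or equivalently to test both sides against an arbitrary $\psi\in\mathcal{C}_0(G/H)$ and reduce to Weil's formula~(\ref{TH.m}). The ingredients I would combine are the identity $(\mu_\varphi)_q=\sigma_{\varphi_q}$ from~(\ref{q.em}); the fact recorded in~(\ref{sigma.hom}) that $f\mapsto\sigma_f$ is a $*$-homomorphism of $L^1(G)$ into $M(G)$, so that $(\sigma_{\varphi_q})^{\ast^G}=\sigma_{\varphi_q^{\ast^G}}$; and Proposition~\ref{inv.m.p2}(1), which gives $\lambda^{\ast^{G/H}}=T_H(\lambda_q^{\ast^G})$ for every $\lambda\in M(G/H)$. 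One first notes that $\varphi_q\in L^1(G)$ with $\|\varphi_q\|_{L^1(G)}=\|\varphi\|_{L^1(G/H,\mu)}$, which is immediate from Weil's formula applied to $|\varphi_q|$ together with $T_H(|\varphi_q|)=|\varphi|$; in particular $\varphi_q^{\ast^G}\in L^1(G)$.

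First I would handle the left-hand side. Applying Proposition~\ref{inv.m.p2}(1) with $\lambda=\mu_\varphi$ gives $(\mu_\varphi)^{\ast^{G/H}}=T_H((\mu_\varphi)_q^{\ast^G})$. Substituting $(\mu_\varphi)_q=\sigma_{\varphi_q}$ from~(\ref{q.em}) and using that $f\mapsto\sigma_f$ intertwines the involutions of $L^1(G)$ and $M(G)$, this becomes $(\mu_\varphi)^{\ast^{G/H}}=T_H(\sigma_{\varphi_q^{\ast^G}})$.

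Next I would handle the right-hand side. By Proposition~\ref{inv.basic}(2) --- which extends from $\mathcal{C}_c(G/H)$ to $L^1(G/H,\mu)$ by density together with the norm bound of Proposition~\ref{inv.basic}(3) --- we have $\varphi^{\ast^{G/H}}=T_H(\varphi_q^{\ast^G})$. It therefore suffices to establish the general transfer identity $\mu_{T_H(f)}=T_H(\sigma_f)$ for $f\in L^1(G)$ and apply it to $f=\varphi_q^{\ast^G}$. This identity is a short Weil's formula computation: for $\psi\in\mathcal{C}_0(G/H)$,
\[
T_H(\sigma_f)(\psi)=\int_G\psi_q(x)f(x)\,dx=\int_{G/H}\left(\int_H\psi(xhH)f(xh)\,dh\right)d\mu(xH)=\int_{G/H}\psi(xH)T_H(f)(xH)\,d\mu(xH),
\]
where the middle step is Weil's formula~(\ref{TH.m}) and the last uses $\psi(xhH)=\psi(xH)$; the final expression equals $\mu_{T_H(f)}(\psi)$. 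Combining the two computations yields $(\mu_\varphi)^{\ast^{G/H}}=T_H(\sigma_{\varphi_q^{\ast^G}})=\mu_{T_H(\varphi_q^{\ast^G})}=\mu_{\varphi^{\ast^{G/H}}}$.

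I expect the only genuine obstacle here to be bookkeeping rather than any real difficulty: one must check that each identity quoted for $\mathcal{C}_c$ survives the passage to $L^1$ (it does, since all maps in play are $L^1$-bounded and $\mathcal{C}_c(G/H)$ is dense), and that the involution on $M(G/H)$ restricted to the image of $L^1(G/H,\mu)$ is compatible with the conjugate-linear structure used to build $\mu_\varphi$ --- concretely $\overline{\mu_\varphi}=\mu_{\overline\varphi}$, which is what one would need if instead of going through the quotient one computed $(\mu_\varphi)^{\ast^{G/H}}(\psi)$ directly from the definition~(\ref{inv.m}) and matched it term by term against $\mu_{\varphi^{\ast^{G/H}}}(\psi)$ using~(\ref{inv.c}). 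Everything past that is Weil's formula and the already-established transfer identities.
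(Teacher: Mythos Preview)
Your argument is correct. The route, however, differs from the paper's. The paper proceeds by a direct pairing computation: for $\psi\in\mathcal{C}_0(G/H)$ it uses Proposition~\ref{inv.m.p1}(1) to write $\overline{(\mu_\varphi)^{\ast^{G/H}}(\psi)}=\mu_\varphi(\psi^{\ast^{G/H}})$, expands this as $\int_{G/H}T_H(\psi_q^{\ast^G})\varphi\,d\mu$, pushes through Weil's formula to $\int_G\psi_q^{\ast^G}\varphi_q\,d\sigma$, flips the involution on $G$, and pushes back down to $G/H$ via Weil again to recognize $\overline{\mu_{\varphi^{\ast^{G/H}}}(\psi)}$. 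Your approach is more structural: you lift $\mu_\varphi$ to $M(G)$ via $(\mu_\varphi)_q=\sigma_{\varphi_q}$, invoke the known $*$-compatibility $\sigma_f^{\ast^G}=\sigma_{f^{\ast^G}}$ on $L^1(G)$, and descend via the transfer identities $T_H(\lambda_q^{\ast^G})=\lambda^{\ast^{G/H}}$ and the auxiliary $T_H(\sigma_f)=\mu_{T_H(f)}$ you establish. Your route makes the result transparently a consequence of the already-catalogued operator-theoretic identities (Propositions~\ref{inv.basic}, \ref{inv.m.p2}, and~(\ref{q.em})), at the price of invoking more machinery and an $L^1$-density argument for Proposition~\ref{inv.basic}(2); the paper's route is self-contained and needs only Proposition~\ref{inv.m.p1}(1) plus two applications of Weil's formula. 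Both are short, and either would be acceptable.
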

\begin{proof}
Let $\psi\in\mathcal{C}_0(G/H)$. Then, using Weil's formula, we have 
\begin{align*}
\overline{(\mu_\varphi)^{\ast^{G/H}}(\psi)}&=\mu_{\varphi}(\psi^{\ast^{G/H}})
\\&=\int_{G/H}T_H(\psi_q^{*^G})(xH)\varphi(xH)d\mu(xH)
\\&=\int_{G/H}T_H(\psi_q^{*^G}\cdot\varphi_q)(xH)d\mu(xH)
\\&=\int_G\psi_q^{\ast^G}(x)\varphi_q(x)dx
\\&=\int_G\overline{\psi_q(x)}\varphi_q(x^{-1})dx
\\&=\int_{G/H}\overline{\psi(xH)}\left(\int_H\varphi(h^{-1}x^{-1}H)dh\right)d\mu(xH)
=\overline{\mu_{\varphi^{\ast^{G/H}}}(\psi)},
\end{align*}
which completes the proof.
\end{proof}

\begin{corollary}
{\it Let $G$ be a locally compact group and $H$ be a compact subgroup of $G$.
Let $\mu$ be the normalized $G$-invariant measure over $G/H$ associated to (\ref{TH.m}) with respect to 
left Haar measure $\sigma$ of $G$ and the probability measure of $H$. Then, 
 $\varphi\mapsto\mu_\varphi$ defines an isometric $*$-homomorphism embedding of the Banach function $*$-algebra  
$A^1(G/H,\mu)$ into the Banach measure $*$-algebra $\mathcal{M}(G/H)$.
}\end{corollary}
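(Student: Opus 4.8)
The plan is to combine the three structural facts already established: the embedding $\varphi\mapsto\mu_\varphi$ is an isometric homomorphism from $L^1(G/H,\mu)$ into $M(G/H)$ (stated just after the definition of $\ast_{G/H}$ for measures), the embedding is compatible with involutions, i.e. $(\mu_\varphi)^{\ast^{G/H}}=\mu_{\varphi^{\ast^{G/H}}}$ (the Proposition immediately preceding this Corollary), and the fact that $A^1(G/H,\mu)$ is a Banach function $*$-algebra under $\ast_{G/H}$ and $^{\ast_{G/H}}$ (from the last Proposition of Section 3). So essentially all that remains is to verify that the image of $A^1(G/H,\mu)$ under $\varphi\mapsto\mu_\varphi$ lands inside $\mathcal{M}(G/H)$, and that the resulting map is still isometric, multiplicative, and $*$-preserving.

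First I would handle the target space. Recall from Proposition \ref{main.m.G1}(2) that $\mathcal{M}(G/H)=\{\lambda\in M(G/H):\lambda\circ J_0=\lambda\}$. So I need to show that for $\varphi\in A^1(G/H,\mu)$ we have $\mu_\varphi\circ J_0=\mu_\varphi$. Since $\varphi$ is $H$-left-invariant, $J_1\varphi=\varphi$ (as $A^1(G/H,\mu)=J_1(L^1(G/H,\mu))$ and $J_1$ is the identity on $A^1$, which follows from the surjectivity of $J$ onto $A(G/H)$ and an approximation argument, or directly since $J\varphi(xH)=\int_H\varphi(hxH)\,dh=\int_H\varphi(xH)\,dh=\varphi(xH)$ for $\varphi\in A(G/H)$). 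For $\psi\in\mathcal{C}_0(G/H)$ I would compute
\[
(\mu_\varphi\circ J_0)(\psi)=\int_{G/H}J_0\psi(xH)\varphi(xH)\,d\mu(xH)
=\int_{G/H}\Big(\int_H\psi(hxH)\,dh\Big)\varphi(xH)\,d\mu(xH),
\]
and then use the $G$-invariance of $\mu$ (applying $L_h$ in the inner variable) together with the $H$-invariance of $\varphi$ to move the averaging from $\psi$ onto $\varphi$, obtaining
\[
=\int_{G/H}\psi(xH)\Big(\int_H\varphi(h^{-1}xH)\,dh\Big)\,d\mu(xH)=\int_{G/H}\psi(xH)\varphi(xH)\,d\mu(xH)=\mu_\varphi(\psi).
\]
Hence $\mu_\varphi\in\mathcal{M}(G/H)$, so the embedding restricts to a map $A^1(G/H,\mu)\to\mathcal{M}(G/H)$.

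With the codomain pinned down, the remaining properties are inherited for free. Isometry: $\|\mu_\varphi\|_{M(G/H)}=\|\varphi\|_{L^1(G/H,\mu)}$ is exactly the isometric-embedding theorem from Section 4, and the norm on $\mathcal{M}(G/H)$ is the restriction of the norm on $M(G/H)$. Multiplicativity: $\mu_\varphi\ast_{G/H}\mu_{\varphi'}=\mu_{\varphi\ast_{G/H}\varphi'}$ is the Proposition stated earlier in Section 5, and since $A^1(G/H,\mu)$ is closed under $\ast_{G/H}$ (Section 3), both sides lie in $\mathcal{M}(G/H)$. The $*$-property: $(\mu_\varphi)^{\ast^{G/H}}=\mu_{\varphi^{\ast^{G/H}}}$ is the Proposition just above, and $A^1(G/H,\mu)$ is closed under $^{\ast_{G/H}}$. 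Therefore $\varphi\mapsto\mu_\varphi$ is an isometric $*$-homomorphism of $A^1(G/H,\mu)$ into $\mathcal{M}(G/H)$, and injectivity is immediate from the isometry. I expect the only genuine point requiring care is the computation that $\mu_\varphi\circ J_0=\mu_\varphi$, specifically the bookkeeping of the change of variables that transfers the $H$-average across Weil's formula; everything else is a direct appeal to results already proved in the excerpt.
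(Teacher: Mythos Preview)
Your proposal is correct and is precisely the argument the paper has in mind: the corollary is stated without proof, as an immediate consequence of the isometric embedding theorem of Section~4, the multiplicativity $\mu_\varphi\ast_{G/H}\mu_{\varphi'}=\mu_{\varphi\ast_{G/H}\varphi'}$, and the preceding proposition $(\mu_\varphi)^{\ast^{G/H}}=\mu_{\varphi^{\ast^{G/H}}}$. Your additional verification that $\mu_\varphi\in\mathcal{M}(G/H)$ for $\varphi\in A^1(G/H,\mu)$ via the characterization $\mathcal{M}(G/H)=\{\lambda:\lambda\circ J_0=\lambda\}$ is a correct and worthwhile detail to make explicit; the change of variables you outline (Fubini, $G$-invariance of $\mu$, then $L_h\varphi=\varphi$) goes through without issue.
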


\bibliographystyle{amsplain}

\end{document}